\documentclass[psamsfonts]{amsart}
\usepackage{amssymb,amsfonts,lacromay}
\usepackage[all,arc]{xy}
\usepackage{enumerate}
\usepackage{color}

\usepackage{mathrsfs}

\usepackage{picture}
\usepackage{graphicx}

\usepackage{pstricks,color}
\usepackage{pst-plot}



\newrgbcolor{tauzerocolor}{0.5 0.5 0.5}

\newrgbcolor{twoextncolor}{0.7 0.7 0}
\newrgbcolor{etaextncolor}{0.5 0 1}
\newrgbcolor{nuextncolor}{0.6 0.3 0}

\newgray{gridline}{0.8}

\newcommand{\cirrad}{0.06}

\newpsobject{twoextn}{psline}{linecolor=twoextncolor}
\newpsobject{twoextncurve}{psbezier}{linecolor=twoextncolor}
\newpsobject{etaextn}{psline}{linecolor=etaextncolor}
\newpsobject{nuextn}{psline}{linecolor=nuextncolor}
\newpsobject{nuextncurve}{psbezier}{linecolor=nuextncolor}


\newtheorem{thm}{Theorem}[section]
\newtheorem{cor}[thm]{Corollary}

\newtheorem{lem}[thm]{Lemma}

\theoremstyle{definition}

\newtheorem{rem}[thm]{Remark}

\makeatletter
\let\c@equation\c@thm
\makeatother
\numberwithin{equation}{section}

\makeatletter
\ifx\SK@label\undefined\let\SK@label\label\fi
 \let\your@thm\@thm
 \def\@thm#1#2#3{\gdef\currthmtype{#3}\your@thm{#1}{#2}{#3}}
 \def\mylabel#1{{\let\your@currentlabel\@currentlabel\def\@currentlabel
  {\currthmtype~\your@currentlabel}
 \SK@label{#1@}}\label{#1}}
 
\makeatother

\title{The strong Kervaire invariant problem in dimension $62$}

\author{Zhouli Xu}
\address{Department of Mathematics, The University of Chicago, Chicago, IL 60637}
\email{xu@math.uchicago.edu}

\bibliographystyle{plain}

\begin{document}

\maketitle

\begin{abstract}

Using a Toda bracket computation $\langle \theta_4, 2, \sigma^2\rangle$ due to Daniel C. Isaksen \cite{DI2}, we investigate the $45$-stem more thoroughly. We prove that $\theta_4^2=0$ using a $4$-fold Toda bracket. By \cite{BJM2}, this implies that $\theta_5$ exists and there exists a $\theta_5$ such that $2\theta_5=0$. Based on $\theta_4^2=0$, we simplify significantly the $9$-cell complex construction in \cite{BJM1} to a $4$-cell complex, which leads to another proof that $\theta_5$ exists.

\end{abstract}

\section{Introduction and main results}

The Kervaire invariant problem is one of the most interesting problems that relates geometric topology and stable homotopy theory. One way of formulating it, due to Browder \cite{Brd},  is in terms of the classical Adams spectral sequence (ASS) at the prime $2$:

$$\text{For each $n$, the element}~ h_n^2\in Ext^{2, 2^{n+1}-2} ~\text{survives in the ASS.}$$
\\
If $h_n^2$ survives, we denote the corresponding detecting elements in homotopy by $\theta_n\in\pi_{2^{n+1}-2}S^0$ and we say that $\theta_n$ exists. The strong Kervaire invariant problem for $n$ is the following.

$$\theta_n ~\text{exists, and there exists a}~ \theta_n ~\text{such that}~ 2 \theta_n =0.$$
\\
It is well-known that the first three Kervaire invariant elements $\theta_1, \theta_2$ and $\theta_3$ can be chosen to be $\eta^2, \nu^2$ and $\sigma^2$. And they all have order $2$. Mahowald and Tangora \cite{MT} showed that $\theta_4$ exists and $2 \theta_4=0$ by an ASS computation. In \cite{BJM1}, Barratt, Jones and Mahowald showed that $\theta_5$ exists by constructing a $9$-cell complex and using the Peterson-Stein formula. Recently, using equivariant homotopy technology, Hill, Hopkins and Ravenel \cite{HHR} in their marvelous paper showed that $\theta_n$ does not exist for all $n\geq 7$, which left the existence of $\theta_6$ as the only open case.\\

In \cite{BJM2}, Barratt, Jones and Mahowald gave the following inductive approach to the strong Kervaire invariant problem:

\begin{thm}
Suppose that there exists an element $\theta_n$ such that $2 \theta_n=0$ and $\theta_n^2=0$. Then there exists an element $\theta_{n+1}$ with $2 \theta_{n+1}=0$.
\end{thm}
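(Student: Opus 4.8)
The plan is to realize $\theta_{n+1}$ as an element of a four-fold Toda bracket built out of $\theta_n$ and the two relations we are given. Writing $d=2^{n+1}-2$ for the degree of $\theta_n$, note that the target degree is $2^{n+2}-2=2d+2$, which is exactly the degree of the four-fold bracket $\langle 2,\theta_n,\theta_n,2\rangle$. The two hypotheses are precisely what make this bracket defined: the vanishing of the consecutive products $2\cdot\theta_n=0$, $\theta_n\cdot\theta_n=\theta_n^2=0$ and $\theta_n\cdot 2=0$ handles the pairwise conditions, while $2\theta_n=0$ and $\theta_n^2=0$ also let one choose the two three-fold sub-brackets $\langle 2,\theta_n,\theta_n\rangle$ and $\langle\theta_n,\theta_n,2\rangle$ to contain $0$ (this last point needs an indeterminacy check, but the relevant indeterminacies contain the required null classes). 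Equivalently, and this is the form I would actually work with, I would first use $2\theta_n=0$ to extend $\theta_n$ over the mod $2$ Moore spectrum $M=S^0\cup_2 e^1$, obtaining $\bar\theta_n\colon \Sigma^d M\to S^0$ restricting to $\theta_n$ on the bottom cell, and then use $\theta_n^2=0$ to splice two such extensions into a finite complex carrying the bracket.

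Second, I would prove that some representative of $\langle 2,\theta_n,\theta_n,2\rangle$ is detected by $h_{n+1}^2$, which is what it means for $\theta_{n+1}$ to exist. Concretely, the splicing above produces a small finite complex $X$ (in the spirit of the cell complexes of \cite{BJM1}, which the present paper streamlines) together with a map $X\to S^0$ and a distinguished bottom cell, and the attaching data realizing $2\theta_n=0$ and $\theta_n^2=0$ make the functional Steenrod operation $Sq^{2^{n+1}}$ defined on the relevant cohomology class of $X$. The Peterson--Stein formula then identifies this functional operation with the value of the secondary operation attached to the Adem relation for $Sq^{2^{n+1}}Sq^{2^{n+1}}$, and I would check that this value is the nonzero class $h_{n+1}^2\in Ext^{2,2^{n+2}-2}$. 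This forces $h_{n+1}^2$ to be a permanent cycle in the ASS and exhibits the chosen element of the bracket as a detecting $\theta_{n+1}$.

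Third, the order statement $2\theta_{n+1}=0$ I would extract from the shape of the bracket itself. Because $\theta_{n+1}$ is represented in a Toda bracket whose trailing entry (the one closest to the source sphere) is the class $2$, the construction realizes $\theta_{n+1}$ as a coextension over the mod $2$ Moore spectrum, i.e.\ $\theta_{n+1}$ factors through the cofibre of $2$; this is exactly the assertion $2\theta_{n+1}=0$. More formally this is Toda's shuffle formula applied to the trailing $2$, whose output indeterminacy can be killed using $2\theta_n=0$ together with the freedom in choosing the representative.

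The main obstacle is the detection step. Establishing the pairwise and sub-bracket vanishing and the order relation is essentially formal Toda-bracket bookkeeping, but showing that the secondary operation genuinely takes the value $h_{n+1}^2$ --- that is, that the functional operation on $X$ is nonzero and is correctly matched, through Peterson--Stein, with the Kervaire class --- is the crux, since a priori the construction only guarantees that the operation is \emph{defined}, not that it is nontrivial.
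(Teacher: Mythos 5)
First, a point of comparison: the paper does not actually prove this statement --- it is quoted as the inductive theorem of Barratt--Jones--Mahowald \cite{BJM2} --- and the closest argument the paper contains is the proof of Theorem 1.5, which carries out your strategy in the case $n=4$ (a small complex realizing a bracket-type composite, detected via functional operations and the Peterson--Stein formula). Your skeleton is therefore the right one, but two of your steps have genuine gaps. The first is the claim that the definedness of the four-fold bracket is ``essentially formal Toda-bracket bookkeeping.'' The hypotheses $2\theta_n=0$ and $\theta_n^2=0$ only kill the pairwise products; the requirement that the three-fold faces contain $0$ is an extra, genuinely computational input, not something the indeterminacies hand you. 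For $n=4$, the statement $0\in\langle\theta_4,2,\theta_4\rangle$ is a theorem of \cite{BJM1}, and the paper's proof of Theorem 1.5 cites it as exactly such a ``secondary obstruction.'' Moreover, in the Moore-spectrum splicing picture that you say you would actually work with, the coextension step runs into the face $\langle 2,\theta_n,2\rangle$, which contains $\eta\theta_n$ and in general does not contain $0$ (for $n=4$, $\eta\theta_4\neq 0$ generates a $\mathbb{Z}/2$ summand of $\pi_{31}$). This is precisely why \cite{BJM1} and the present paper do not coextend $2$ but coextend $\eta\vee 2$: the $\eta$ is there to cancel $\eta\theta_4\in\langle 2,\theta_4,2\rangle$. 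Your proposal has no counterpart of this correction.

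The second and more serious gap is the order statement. It is false that an element lying in a Toda bracket with trailing entry $2$, or related to the mod $2$ Moore spectrum $M$ in the way you describe, must have order $2$: the identity of $M$ itself has order $4$ (one has $2\cdot\mathrm{id}_M=i\eta p\neq 0$), and this is the classical obstruction to exactly the argument you sketch. Your ``more formal'' version via a shuffle on the trailing $2$ also fails, because shuffling that $2$ to the outside produces faces with two adjacent $2$'s, such as $\langle\beta,2,2\rangle$, which are not even defined since $2\cdot 2\neq 0$ in $\pi_0$. Producing a representative with $2\theta_{n+1}=0$ is the entire content of the word ``strong'' in this problem and is the delicate part of \cite{BJM2}. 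The paper itself illustrates the point: its own cell-complex argument (Theorem 1.5) yields only the \emph{existence} of $\theta_5$, and the conclusion $2\theta_5=0$ in Corollary 1.3 is obtained by invoking the present theorem from \cite{BJM2}, not by reading it off the construction. Finally, you correctly identify the Peterson--Stein detection step as the crux, but it remains unresolved in your proposal; in the paper it is discharged by an explicit check of the functional operations $Sq_g^{2^i}$ and $Sq_f^{32}$ on the concrete complex $X$, which presupposes having first solved the two obstruction problems above.
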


In this paper, we prove the following:

\begin{thm}
$\theta_4^2=0$.
\end{thm}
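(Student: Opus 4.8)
The plan is to pin down $\theta_4^2\in\pi_{60}S^0$ by exhibiting it inside a suitable four-fold Toda bracket assembled from $\theta_4$, $\sigma^2=\theta_3$, and the degree-two map, and then to argue that this bracket contains $0$ with small enough indeterminacy to force $\theta_4^2=0$. The computation $\langle\theta_4,2,\sigma^2\rangle$ of Isaksen in the $45$-stem is precisely the input that lets one verify the defining conditions of this four-fold bracket and evaluate it; Moss's convergence theorem is the bridge that turns each homotopy Toda bracket into a computable Massey product in $Ext_{\mathcal{A}}(\mathbb{F}_2,\mathbb{F}_2)$.

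First I would record the relevant algebra on the $E_2$-page of the classical Adams spectral sequence. Since $\theta_4$ is detected by $h_4^2$, may be chosen (Mahowald--Tangora) with $2\theta_4=0$, and $\sigma^2=\theta_3$ is detected by $h_3^2$ with $2\sigma^2=0$, the Adams relations $h_ih_{i+1}=0$ and $h_{i+1}^3=h_i^2h_{i+2}$ give $h_4^4=h_3^2(h_4h_5)=0$ and $h_3^2h_4^2=(h_3h_4)^2=0$. The first vanishing shows $\theta_4^2$ has Adams filtration at least $5$, so it cannot be read off a product of permanent cycles and a bracket description is the natural device. The second suggests the preliminary fact $\sigma^2\theta_4=0$ in $\pi_{44}$, which I would establish first by ruling out a nonzero value in filtration $\ge 5$; this is part of the promised closer look at the $45$-stem and its neighbors.

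With $2\theta_4=0$, $2\sigma^2=0$, and $\sigma^2\theta_4=0$ in hand, all the consecutive products needed for a four-fold bracket of the shape $\langle\sigma^2,\theta_4,2,\sigma^2\rangle\subseteq\pi_{60}S^0$ vanish, and its two length-three sub-brackets land in the $45$-stem. Here Isaksen's computation enters: it determines $\langle\theta_4,2,\sigma^2\rangle$, and via the symmetry and juggling relations among Toda brackets its companion $\langle\sigma^2,\theta_4,2\rangle$, precisely enough to check that each sub-bracket contains $0$, so that the four-fold bracket is defined. I would then use the shuffle (juggling) formulas, together with Moss's theorem and the relation $h_4^3=h_3^2h_5$ tracking detecting classes in the $45$-stem, to show that $\theta_4^2$ itself lies in this four-fold bracket up to the usual correction terms.

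The main obstacle is the endgame in $\pi_{60}S^0$: showing not merely that the four-fold bracket is defined but that it contains $0$, and that its indeterminacy---assembled from products such as $\sigma^2\cdot\pi_{46}$ and $\pi_{44}\cdot\sigma^2$ and from the indeterminacies of the sub-brackets---cannot separate $\theta_4^2$ from $0$. This demands a fairly complete description of $\pi_{60}S^0$ and of several neighboring stems, together with careful bookkeeping of the juggling identities that relate $\langle\sigma^2,\theta_4,2,\sigma^2\rangle$ to $\theta_4^2$. The crucial point is that Isaksen's precise value for $\langle\theta_4,2,\sigma^2\rangle$ keeps this indeterminacy analysis finite and decidable, so that all the ambiguity can be absorbed and $\theta_4^2=0$ emerges as the only possibility.
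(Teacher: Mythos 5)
Your outline founders on its very first structural requirement. For the four-fold bracket $\langle\sigma^2,\theta_4,2,\sigma^2\rangle$ to be defined, its three-fold sub-brackets---in particular $\langle\theta_4,2,\sigma^2\rangle$---must contain zero. But Isaksen's computation, the very input you invoke, says the opposite: $\langle\theta_4,2,\sigma^2\rangle$ contains an element of \emph{order two detected by} $h_0h_4^3$, and its indeterminacy is $\{0,\rho_{15}\theta_4\}$ with $\rho_{15}\theta_4\neq 0$ (it is detected by $h_0^2h_5d_0$, by Tangora). Hence every element of $\langle\theta_4,2,\sigma^2\rangle$ is nonzero, the four-fold bracket you propose is not defined, and the juggling you describe cannot start. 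Your sentence claiming that Isaksen's computation lets one ``check that each sub-bracket contains $0$, so that the four-fold bracket is defined'' is exactly backwards; the assumption that $\langle\theta_4,2,\sigma^2\rangle$ vanishes (or is controllably small) is precisely the failed step in the earlier arguments of Lin and Kochman, and Isaksen's computation was carried out to refute it. This is why the paper warns that, since Isaksen's answer is more complicated than the earlier claims, ``we must study several other Toda brackets to prove $\theta_4^2=0$.''

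The actual proof works around this obstruction by changing the third entry of the bracket. Since $\langle\theta_4,2,\kappa\rangle$ also contains an order-two element detected by $h_0h_4^3$, one can hope for cancellation in $\langle\theta_4,2,\sigma^2+\kappa\rangle$; the paper proves (Theorem 2.1, after ruling out the candidate values $\{w\}$ and $\eta\{g_2\}$) that this bracket contains $0$ with indeterminacy $\{0,\rho_{15}\theta_4\}$. It then needs a four-fold expression for $\theta_4$ compatible with this modification, namely $\theta_4=\langle 2,\sigma^2+\kappa,2\sigma,\sigma\rangle$ with zero indeterminacy (Theorem 2.2, deduced from the classical $\theta_4=\langle 2,\sigma^2,2\sigma,\sigma\rangle$ together with $\langle 2,\kappa,2\sigma,\sigma\rangle=0$). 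The shuffle
$$\theta_4^2=\theta_4\langle 2,\sigma^2+\kappa,2\sigma,\sigma\rangle\subseteq\bigl\langle\langle\theta_4,2,\sigma^2+\kappa\rangle,2\sigma,\sigma\bigr\rangle$$
then reduces everything to the two brackets $\langle 0,2\sigma,\sigma\rangle$ and $\langle\rho_{15}\theta_4,2\sigma,\sigma\rangle$, each of which is shown to be $\{0\}$ using $\sigma\pi_{53}=0$ and Tangora's relation $\rho_{15}\theta_4=2\sigma\{h_0^2h_3h_5\}$. If you wish to salvage your outline without the $\sigma^2+\kappa$ device, you would have to carry the nonzero value $2\alpha+\beta$ of $\langle\theta_4,2,\sigma^2\rangle$ through the juggling and evaluate brackets such as $\langle 2\alpha+\beta,2,\sigma^2\rangle$ in the $60$-stem directly, a substantially harder computation that your proposal does not address.
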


Since $\theta_4$ is unique and $2 \theta_4=0$, we have the following corollary:

\begin{cor}
$\theta_5$ exists and there exists a $\theta_5$ such that $2 \theta_5=0$.
\end{cor}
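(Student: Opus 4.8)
The plan is to obtain the corollary as a purely formal consequence of the two theorems already stated, so that the entire homotopy-theoretic content sits in the identity $\theta_4^2=0$. The Barratt--Jones--Mahowald induction takes as input a \emph{single} class $\theta_n$ satisfying both $2\theta_n=0$ and $\theta_n^2=0$, and returns a $\theta_{n+1}$ with $2\theta_{n+1}=0$. Thus it suffices to exhibit one element $\theta_4\in\pi_{30}S^0$ for which both relations hold at once, and then to invoke the induction with $n=4$.

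First I would invoke the facts already recorded in the introduction: by Mahowald--Tangora the class $h_4^2$ is a permanent cycle, so $\theta_4$ exists and may be chosen with $2\theta_4=0$, and moreover $\theta_4$ is \emph{unique}. This uniqueness is the crucial bookkeeping point. Because there is no ambiguity in the choice of $\theta_4$, the relation $\theta_4^2=0$ furnished by the preceding theorem necessarily holds for the very same class that satisfies $2\theta_4=0$, rather than for some a priori different representative of $h_4^2$. Hence a single $\theta_4$ verifies both hypotheses of the induction simultaneously.

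With both hypotheses in hand for this $\theta_4$, I would apply the Barratt--Jones--Mahowald theorem at $n=4$ to conclude that there exists a $\theta_5\in\pi_{62}S^0$ with $2\theta_5=0$; the existence of $\theta_5$, i.e.\ the survival of $h_5^2$ in the Adams spectral sequence, is contained in this statement. This yields both assertions of the corollary at once.

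I expect no genuine obstacle in this last step, since all of the analytic difficulty has been pushed into the theorem $\theta_4^2=0$. The only point demanding care is the \emph{simultaneity} of the two vanishing relations, and this is settled once and for all by the uniqueness of $\theta_4$. Were the relevant part of $\pi_{30}S^0$ larger, one would instead have to check that some order-two representative of $h_4^2$ also squares to zero; the uniqueness of $\theta_4$ removes this concern entirely.
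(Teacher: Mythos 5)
Your proposal is correct and matches the paper's own argument exactly: the paper likewise deduces the corollary from Theorem 1.2 together with the Barratt--Jones--Mahowald induction (Theorem 1.1), noting precisely that the uniqueness of $\theta_4$ and the relation $2\theta_4=0$ guarantee that a single class satisfies both hypotheses. Your emphasis on the simultaneity issue being resolved by uniqueness is the same (and the right) bookkeeping point the paper relies on.
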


\begin{rem}
In \cite{Mil}, R. J. Milgram claims to show that under the same condition as in Theorem 1.1, one has $\theta_{n+2}$ exists. If this were true, then we would have that $\theta_6$ exists. However, Milgram's argument fails because of a computational mistake \cite{Br3}.
\end{rem}

\begin{rem}
Note that if one can further prove that the same $\theta_5$ has the property $\theta_5^2=0$, then Theorem 1.1 will imply the open case $\theta_6$ exists and that there exists a $\theta_6$ such that $2 \theta_6=0$.\\
\end{rem}

For the case $\theta_5$, Lin \cite{Lin} shows that there exists a $\theta_5$ such that $2 \theta_5=0$ based on a computation of the Toda bracket $\langle \theta_4, 2, \sigma^2\rangle$. Based on the same Toda bracket but a different computational result, Kochman \cite{Ko} also shows that $\theta_4^2=0$ and hence that there exists a $\theta_5$ such that $2 \theta_5=0$. Recently, Isaksen \cite{DI2} computed this Toda bracket using more straightforward arguments. His result contradicts the results of both Lin and Kochman. For more details about where Lin and Kochman's arguments fails, see Remark 3.4. Our proof uses Isaksen's computation. Since Isaksen's computation of $\langle \theta_4, 2, \sigma^2\rangle$ gives a more complicated answer than the earlier claims, we must study several other Toda brackets to prove $\theta_4^2=0$.\\

Knowing $\theta_4^2=0$, we give a second proof of the existence of $\theta_5$. In \cite{BJM1}, Barratt, Jones and Mahowald constructed a $9$-cell complex $X'$, and maps $f':S^{62}\rightarrow X'$, $g':X'\rightarrow S^0$, such that the composite $g'\circ f' : S^{62}\rightarrow S^0$ realizes a $\theta_5$. We simplify this $9$-cell complex $X'$ into a $4$-cell complex $X$, and construct maps $f:S^{62}\rightarrow X$, $g:X\rightarrow S^0$ as indicated in the following cell diagram. We follow Barratt, Jones and Mahowald's notation of cell diagrams.

\begin{displaymath}
    \xymatrix{
   *+[o][F-]{62} \ar[rrr]^{2} \ar[drr]_{\eta} & & & *+[o][F-]{62} \ar@{-}[ddr]^{\theta_4} & & & \\
    & & *+[o][F-]{61} \ar@{-}[ddr]_{\theta_4} &  &  & & \\
  & &  &  & *+[o][F-]{31} \ar@{-}[dl]^{2} & & \\
   & &      &     *+[o][F-]{30} \ar[rrr]^{\theta_4} & & & *+[o][F-]{0} }
\end{displaymath}

Here each circle represents a cell. The number in each circle represents the dimension of that cell. The middle 4 cells represent the cell structure of $X$, where the three lines without arrow heads represent attaching maps of $X$. The map $g$ is an extension of $\theta_4$, and the map $f$ is a co-extension of $\eta\vee 2$. In other words, if we restrict the map $g$ on the bottom cell of $X$: $g|_{S^{30}}:S^{30}\rightarrow S^0$, we have $\theta_4$. If we pinch down the $31$-skeleton of $X$: $p:X\rightarrow S^{61}\vee S^{62}$, then the composite $p\circ f:S^{62}\rightarrow S^{61}\vee S^{62}$ is $\eta \vee 2$. For more details about cell diagrams, see \cite{BJM1}.

\begin{thm}
The composite of maps $g\circ f: S^{62}\rightarrow S^0$ realizes a $\theta_5$.
\end{thm}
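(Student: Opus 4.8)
The plan is to show that the composite $g \circ f$ is detected by $h_5^2$ in the Adams spectral sequence, which is exactly the statement that $g\circ f$ realizes a $\theta_5$. The strategy follows the Barratt--Jones--Mahowald philosophy: realize $\theta_5$ as a functional operation (a Peterson--Stein / Toda bracket style construction) built out of known lower filtration data, and then show the resulting element has the correct Adams filtration and is detected by $h_5^2$. The central algebraic input should be that $h_5^2 = \langle h_4^2, h_1 + \text{(relevant term)}, \ldots\rangle$ or more precisely that the $4$-cell complex $X$ is engineered so that the functional cohomology operation computing $g \circ f$ lands on the indecomposable $\mathrm{Sq}^{63}$-type class that $h_5^2$ detects.

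First I would set up the attaching-map data of the $4$-cell complex $X$ precisely, reading it off the cell diagram: the bottom cell is $S^{30}$, the $31$-cell is attached to it by $2$ (the line labeled $2$ between cells $31$ and $30$), and the two top cells $S^{61}, S^{62}$ are attached by $\theta_4$ (the lines labeled $\theta_4$). I must verify that these attaching maps are \emph{consistent}, i.e.\ that the relevant composites vanish so that $X$ actually exists as a $4$-cell complex — this is where $\theta_4^2 = 0$ (Theorem 1.2) enters, since the existence of the cell attached by $\theta_4$ on top of a cell already carrying $\theta_4$-information requires a nullhomotopy of a $\theta_4$-squared type composite. I would similarly verify that $g$ (the extension of $\theta_4$ over $X$) and $f$ (the co-extension of $\eta \vee 2$) exist, again tracking which nullhomotopies are needed and confirming each follows from $\theta_4^2 = 0$ together with the standard relations (e.g.\ $2\theta_4 = 0$, $\eta \theta_4$ data).

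Next I would identify $g \circ f$ with a suitable $4$-fold (or iterated) Toda bracket / Peterson--Stein functional operation. Pinching $X$ appropriately and using the facts $g|_{S^{30}} = \theta_4$ and $p \circ f = \eta \vee 2$, the composite $g \circ f$ should be expressible as a bracket of the shape $\langle \theta_4, 2, \theta_4, \eta\rangle$ (assembled from the three attaching maps $\theta_4, 2, \theta_4$ and the co-extension datum $\eta \vee 2$), matching the known Barratt--Jones--Mahowald formula that produces $\theta_5$ as a bracket built from $\theta_4$ and low-dimensional elements. The key computation is to show this bracket is detected by $h_5^2$: I would pass to the Adams $E_2$-page, where the corresponding Massey product (or algebraic functional operation) involving the classes $h_4^2, h_1, h_0$ yields $h_5^2$, and then argue that no higher Adams differential or filtration jump can destroy this detection.

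The main obstacle I expect is controlling the \emph{indeterminacy} and the \emph{Adams filtration} of $g \circ f$ simultaneously. The element $\theta_5$ lives in $\pi_{62}$, a stem with substantial complexity, so I must rule out that $g \circ f$ is detected by an element of higher Adams filtration than $h_5^2$ (which would mean it is some other, non-Kervaire class) and rule out that the indeterminacy of the defining bracket swallows the $h_5^2$ contribution. Concretely, the hard step is the naturality/convergence argument showing that the geometric functional operation $g \circ f$ has Adams filtration exactly $2$ and projects to $h_5^2$ on $E_2$; this requires knowing enough of the $62$-stem and the behavior of the relevant Adams differentials around filtration $2$, and leaning on Isaksen's Toda bracket computation $\langle \theta_4, 2, \sigma^2\rangle$ to pin down the cross-terms that distinguish $h_5^2$ from competing classes.
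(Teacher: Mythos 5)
Your proposal diverges from the paper's proof at the decisive step, and the step you substitute is both incorrectly set up and left unproven. The paper does \emph{not} identify $g\circ f$ with a Toda bracket, does not pass to Massey products on the Adams $E_2$-page, and does not need to control anything in the $62$-stem. Instead, after forming the cell diagram, it makes two elementary mod $2$ cohomology computations on the $4$-cell complex $X$: the functional operations $Sq_g^{2^i}\colon H^0S^0\to H^{2^i-1}X$ vanish for $i\leq 4$ while $Sq_g^{32}\colon H^0S^0\to H^{31}X$ is nonzero, and $Sq_f^{32}$ is nonzero on $Sq_g^{32}H^0S^0=H^{31}X$. The Peterson--Stein formula then says that $g\circ f$ is detected by the secondary cohomology operation $\phi_{5,5}$, which (by the Browder/Adams framework already invoked in the introduction) is exactly the statement that $g\circ f$ is detected by $h_5^2$, i.e.\ realizes a $\theta_5$. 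The entire point of this construction is to \emph{avoid} the bracket-convergence and indeterminacy problems you flag as your ``main obstacle.''

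Concretely, two things in your sketch fail. First, your bracket identification is wrong: $\langle \theta_4, 2, \theta_4, \eta\rangle$ lives in $\pi_{63}$, not $\pi_{62}$ (a $4$-fold bracket of elements in stems $30,0,30,1$ raises degree by $2$). What the cell structure of $X$ actually encodes is a \emph{matric} bracket mixing the two routes through the complex, roughly a combination of $\langle \theta_4, 2, \theta_4, 2\rangle$ (the $62$-cell route, with $f$ hitting it by degree $2$) and $\langle \theta_4, \theta_4, \eta\rangle$ (the $61$-cell route); making this precise and showing the result is detected by $h_5^2$ is essentially the content of Barratt--Jones--Mahowald's Theorem 1.1, a substantial paper in its own right, and there is no clean Moss-type convergence theorem for $4$-fold brackets that lets you ``pass to the $E_2$-page'' as you propose. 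Second, even granting the setup, your detection step is pure assertion: you say the Massey product ``yields $h_5^2$'' and that no differential or filtration jump destroys this, but offer no argument, and Isaksen's computation of $\langle\theta_4,2,\sigma^2\rangle$ plays no role in such an argument (in the paper it is used only to prove $\theta_4^2=0$, which is input to forming $X$, not to analyzing $\pi_{62}$). A smaller but real omission: forming the diagram requires not just the primary obstructions $2\theta_4=0$ and $\theta_4^2=0$ but also the secondary ones, namely $0\in\langle\theta_4,2,\theta_4\rangle$ (from \cite{BJM1}) and $\eta\theta_4\in\langle 2,\theta_4,2\rangle$; ``standard relations'' does not cover these, and without them neither the $62$-cell attachment nor the co-extension $f$ is known to exist.
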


\begin{proof}
We first show that we can form this cell diagram. For primary obstructions, we have $2 \theta_4=0$ and $\theta_4^2=0$. For secondary obstructions, we have $\eta\theta_4\in\langle 2, \theta_4, 2\rangle$ and $0\in\langle\theta_4, 2, \theta_4 \rangle$. The latter is shown in \cite{BJM1}. It is straightforward to check that the following two facts are true: for $i\leq 4$ the functional cohomology operations
$$Sq_g^{2^i}:H^0S^0\longrightarrow H^{2^i-1}X$$
are all zero, while $Sq_g^{32}:H^0S^0\rightarrow H^{31}X$ is nonzero; the functional cohomology operation $Sq_f^{32}$ is nonzero on $Sq_g^{32}H^0S^0=H^{31}X$. Note that all cohomology is understood to have mod 2 coefficients. As used in \cite{BJM1}, it follows from the Peterson-Stein formula (\cite{MoT},\cite{PS}) that the composite $g\circ f$ is detected by the secondary cohomology operation $\phi_{5,5}$. Therefore $g\circ f$ realizes a $\theta_5$.
\end{proof}

We present the proof of Theorem 1.2 in Section $2$. The proof uses several theorems and lemmas whose proofs we postpone. We include Isaksen's computation of $\langle \theta_4, 2, \sigma^2\rangle$ in Section $3$ for completeness. In Section $4$, we discuss two more Toda brackets in the $45$-stem, namely $\langle \theta_4, 2, \kappa\rangle$ and $\langle \theta_4, 2, \sigma^2+\kappa\rangle$. The proof of the main theorem depends on the computation of the latter bracket.  We give a modified $4$-fold Toda bracket for $\theta_4$ in Section $5$. We complete our proof of the main theorem by proving several lemmas in Section $6$.\\

\textbf{Acknowledgement}: The author would like to thank Dan Isaksen for discussing and sharing lots of his computations. The author would like to thank Dan Isaksen and Peter May for careful reading of several drafts of this paper. The author would also like to thank Bob Bruner for explaining the gap in Milgram's result. This paper is also just a tiny mark of our gratitude to Mark Mahowald for his tenacious exploration of the stable stems and his generosity towards us. The author would like to dedicate this paper to him, with special thanks for his inspiring weekly careful instruction and his guidance the year before his untimely death.

\section{The proof of the main theorem}

We will use the following Toda brackets to prove Theorem 1.2.

\begin{thm}
$\langle \theta_4, 2, \sigma^2+\kappa\rangle$ contains $0$ with indeterminacy $\{0, \rho_{15}\theta_4\}$.
\end{thm}

\begin{thm}
$\theta_4 = \langle 2, \sigma^2 + \kappa, 2 \sigma, \sigma \rangle$ with zero indeterminacy.
\end{thm}

\begin{lem}
$\sigma\pi_{53}=0$.
\end{lem}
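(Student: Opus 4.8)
The plan is to prove that $\sigma \pi_{53} = 0$ by a direct case analysis on the generators of $\pi_{53}$, using the known additive structure of the stable stems in this range together with the multiplicative action of $\sigma \in \pi_7$. Since we only need the product with $\sigma$ to vanish, the strategy is to enumerate the (finitely many) generators of $\pi_{53}$, compute the image of each under multiplication by $\sigma$, and verify each lands in the zero group $\pi_{60}$-component hit by $\sigma$, or is killed by a relation. First I would recall the structure of $\pi_{53}S^0$ from the tables of Isaksen's and Kochman's computations (or equivalently from the Adams spectral sequence charts in the relevant range), listing each generator by its Adams filtration and a name.

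\begin{proof}[Proof sketch]
The group $\pi_{53}$ is a finite abelian $2$-, $3$-, and $5$-torsion group, and it suffices to check that $\sigma$ annihilates each generator. Because $\sigma$ has order $16$ and lives in the $2$-primary part, the products with any odd-torsion generators of $\pi_{53}$ automatically vanish, so one reduces immediately to the $2$-primary component $\pi_{53,(2)}$. One then runs through the $2$-primary generators one at a time. For each generator $x$, the plan is to show $\sigma x = 0$ either by exhibiting $x$ as a multiple $x = 2y$ with $2\sigma y = 0$ combined with a filtration bound, or by showing directly in the Adams $E_\infty$-page for $\pi_{60}$ that the putative product $\sigma x$ has Adams filtration strictly larger than anything present, hence is zero. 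The relevant $E_\infty$ classes in the $60$-stem are few, and the multiplicative structure by $h_3$ (the Adams class detecting $\sigma$) in $\mathrm{Ext}$ over the Steenrod algebra is already recorded, so each product can be read off from the $\mathrm{Ext}$ chart and then lifted to homotopy, checking for possible nonzero hidden extensions.
\end{proof}

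The main obstacle I expect is controlling the possible hidden $\sigma$-extensions: an algebraic product $h_3 \cdot [x]$ may vanish in $\mathrm{Ext}$ while $\sigma x$ is nonetheless nonzero in homotopy (detected in higher Adams filtration), or conversely a nonzero algebraic product may support a differential. To handle this I would lean on the established $\pi_{60}$ computation to rule out the only candidate targets, using known relations such as $\sigma^2$-multiplications and Toda-bracket identities (Moss's theorem) to pin down the ambiguous cases. The remaining paragraphs of the argument amount to the bookkeeping of this finite check; the substantive content is isolating the one or two generators whose $\sigma$-product is \emph{a priori} nonzero and dispatching them via a Toda bracket manipulation or a comparison with the $\sigma^2 = \theta_3$ action, which ties the computation back to the Kervaire-invariant classes under study.
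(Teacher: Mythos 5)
There is a genuine gap: your proposal is a plan for a computation, not a proof, and the part you defer is exactly the hard part. You correctly identify that the danger is hidden $\sigma$-extensions --- an algebraic product $h_3\cdot[x]$ can vanish in $\mathrm{Ext}$ while $\sigma x$ is nonzero in higher Adams filtration --- but you never resolve this; you only say you ``would lean on the established $\pi_{60}$ computation'' and dispatch the ambiguous generators ``via a Toda bracket manipulation.'' Since $\pi_{60}$ is nonzero (e.g.\ $\overline{\kappa}^3$ lives there), the products do not die for trivial target-group reasons, and ruling out hidden extensions into the $60$-stem is precisely the kind of delicate argument that needs to be carried out generator by generator, not asserted. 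A smaller but real error: your fallback of writing a generator as $x=2y$ is vacuous here, because the $2$-primary part of $\pi_{53}$ is elementary abelian, $(\mathbb{Z}/2)^4$, so no nonzero element is divisible by $2$.

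The paper's proof avoids all of this with one structural observation that your sketch is missing: the four generators of $\pi_{53}\cong(\mathbb{Z}/2)^4$ can be \emph{chosen to be decomposable}, namely as elements of $\nu\{h_5c_1\}$, $\nu\{C\}$, $\epsilon\{h_3^2h_5\}$, and $\kappa\{u\}$. Once the generators are written as $\nu$-, $\epsilon$-, and $\kappa$-multiples, the lemma is immediate from the classical low-dimensional relations $\nu\sigma=0$, $\epsilon\sigma=0$, $\kappa\sigma=0$: for instance $\sigma\cdot\nu\{h_5c_1\}=(\sigma\nu)\{h_5c_1\}=0$. No $h_3$-multiplication in the $60$-stem, no $E_\infty$ chart for $\pi_{60}$, and no hidden-extension analysis is needed; the entire content is pushed into the (cited) additive computation of $\pi_{53}$ and the choice of product generators. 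If you want to salvage your approach, the repair is exactly this: rather than enumerating abstract generators and computing their $\sigma$-products, find a generating set consisting of multiples of elements that are known to annihilate $\sigma$.
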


\begin{lem}
$\langle\rho_{15}\theta_4, 2\sigma, \sigma\rangle=0$ with zero indeterminacy.
\end{lem}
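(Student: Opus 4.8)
The plan is to establish two things separately: that the bracket is defined with zero indeterminacy, and that it contains $0$. The entries have degrees $|\rho_{15}\theta_4|=45$ and $|2\sigma|=|\sigma|=7$, so $\langle\rho_{15}\theta_4,2\sigma,\sigma\rangle\subseteq\pi_{60}$. It is defined because $\rho_{15}\theta_4\cdot 2\sigma=\rho_{15}\sigma\,(2\theta_4)=0$, using $2\theta_4=0$, and because $2\sigma\cdot\sigma=2\sigma^2=0$ since $\sigma^2=\theta_3$ has order two. The real content is then the vanishing together with the triviality of the indeterminacy.

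For the indeterminacy, the relevant group is $\rho_{15}\theta_4\cdot\pi_{15}+\pi_{53}\cdot\sigma$. The second summand is $0$ immediately by Lemma 2.3 and graded commutativity. For the first summand I would reduce to the $2$-primary generators of $\pi_{15}$, namely $\rho_{15}$ and $\eta\kappa$, and verify that $\rho_{15}^2\theta_4=0$ and $\eta\kappa\cdot\rho_{15}\theta_4=0$ in $\pi_{60}$ directly from the known multiplicative structure of the stable stems. This is a finite check rather than a conceptual difficulty.

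For the vanishing, the cleanest route is a juggling argument. Since $\theta_4\cdot 2\sigma=0$ and $2\sigma\cdot\sigma=0$, the bracket $\langle\theta_4,2\sigma,\sigma\rangle\subseteq\pi_{45}$ is defined, and the standard inclusion for multiplying into the first entry gives $\rho_{15}\langle\theta_4,2\sigma,\sigma\rangle\subseteq\langle\rho_{15}\theta_4,2\sigma,\sigma\rangle$. Having already shown the right-hand bracket has zero indeterminacy, it is a single homotopy class, so it equals $\rho_{15}w$ for any $w\in\langle\theta_4,2\sigma,\sigma\rangle$. Thus the problem reduces to showing $\rho_{15}\cdot\langle\theta_4,2\sigma,\sigma\rangle=0$. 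I would identify $\langle\theta_4,2\sigma,\sigma\rangle$ as a subset of $\pi_{45}$ by relating it to the $45$-stem brackets already under study, in particular to $\langle\theta_4,2,\sigma^2\rangle$, whose value is controlled by Isaksen's computation, and then show that left multiplication by $\rho_{15}$ annihilates the resulting class in $\pi_{60}$.

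The main obstacle will be this last step: pinning down $\langle\theta_4,2\sigma,\sigma\rangle$ precisely enough inside $\pi_{45}$ and verifying that $\rho_{15}$ sends it to $0$ in $\pi_{60}$. This is exactly where one must lean on detailed knowledge of the $45$- and $60$-stems and the relevant products, and the same input feeds the residual indeterminacy check $\rho_{15}\theta_4\cdot\pi_{15}=0$. If the direct multiplication proves awkward, an alternative is to run the entire argument in the Adams spectral sequence, detecting each class by its Adams filtration and checking that the target filtration in the $60$-stem contains no class compatible with a nonzero product.
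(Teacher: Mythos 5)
Your setup is fine: the bracket lives in $\pi_{60}$, your check that it is defined is correct, and your treatment of the indeterminacy ($\rho_{15}\theta_4\,\pi_{15}+\pi_{53}\,\sigma$, killed by $\sigma\pi_{53}=0$ together with $\rho_{15}^2=0$ and $\kappa\theta_4=0$) matches the paper's, even though you defer the two product checks. The shuffle $\rho_{15}\langle\theta_4,2\sigma,\sigma\rangle\subseteq\langle\rho_{15}\theta_4,2\sigma,\sigma\rangle$ is also legitimate. But the proof has a genuine gap exactly where you flag the ``main obstacle'': you never show that $\rho_{15}$ annihilates $\langle\theta_4,2\sigma,\sigma\rangle$, and the route you sketch for doing so is essentially as hard as the lemma itself. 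Via $\langle\theta_4,2,\sigma^2\rangle\subseteq\langle\theta_4,2\sigma,\sigma\rangle$ and Isaksen's computation, the class you must kill is $\rho_{15}(2\alpha+\beta)$ with $2\alpha+\beta$ detected by $h_0h_4^3$ in filtration $4$; since $h_0^3h_4\cdot h_0h_4^3=0$ in $Ext$ (as $h_4h_5=0$), this product, if nonzero, is a hidden extension into high Adams filtration of the $60$-stem, where nontrivial classes (e.g.\ $\overline{\kappa}^3$, detected by $g^3$) do exist. Ruling that out requires detailed knowledge of $\pi_{60}$ and its module structure that neither you nor the paper's chart provides; note also that you cannot shuffle $\rho_{15}$ into the bracket, precisely because $\rho_{15}\theta_4\neq 0$.

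The paper avoids this dead end with a different factorization of the first entry. By Lemma 6.1 (Tangora), $\rho_{15}\theta_4=2\sigma\{h_0^2h_3h_5\}$, so the same ``multiply into the first slot'' rule that you use gives
\begin{equation*}
\langle\rho_{15}\theta_4,2\sigma,\sigma\rangle=\langle 2\sigma\{h_0^2h_3h_5\},2\sigma,\sigma\rangle\supseteq\sigma\,\langle 2\{h_0^2h_3h_5\},2\sigma,\sigma\rangle,
\end{equation*}
and the residual bracket lands in $\pi_{53}$, which $\sigma$ annihilates by Lemma 2.3. In other words, the winning move is to pull out the factor $\sigma$ rather than the factor $\rho_{15}$, so that the remaining computation happens in the completely understood $53$-stem instead of the $60$-stem. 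That idea, together with the input $\rho_{15}\theta_4=2\sigma\{h_0^2h_3h_5\}$, is what your proposal is missing.
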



We postpone the proof of Theorem 2.1 to Section 4, the proof of Theorem 2.2 to Section 5 and the proofs of Lemma 2.3 and 2.4 to Section 6. Now we present the proof of Theorem 1.2.

\begin{proof}
Following Theorems 2.1 and 2.2, we have

\begin{equation*}
\begin{split}
\theta_4^2 & = \theta_4 \langle 2, \sigma^2 + \kappa, 2 \sigma, \sigma \rangle \\
& \subseteq \langle\langle\theta_4, 2, \sigma^2 + \kappa\rangle, 2\sigma, \sigma\rangle \\
& = \text{the union of~~}  \langle 0, 2\sigma, \sigma\rangle \text{~and~} \langle \rho_{15}\theta_4, 2\sigma, \sigma \rangle
\end{split}
\end{equation*}

By Lemma 2.3 and Lemma 2.4 above, both brackets contain a single element zero. Therefore, we have that $\theta_4^2=0$.
\end{proof}

If $a$ is a surviving cycle in ASS, we use $\{a\}$ to denote the set of elements in the homotopy group that are detected by $a$. For elements in the $E_\infty$-page of the ASS, we include part of Isaksen's chart \cite{DI2}.

~\\

~\\


\psset{linewidth=0.3mm}

\psset{unit=1.1cm}
\begin{pspicture}(43,0)(54,14)

\psgrid[unit=2,gridcolor=gridline,subgriddiv=0,gridlabelcolor=white](22,0)(27,7)

\twoextncurve(45,4)(44.5,4.5)(44.5,5.5)(45,6)
\twoextn(47,10)(47,13)
\twoextncurve[linestyle=dashed](51,6)(50.5,6.5)(50.5,8.5)(51,9)
\twoextncurve(54,9)(54.5,9.5)(54.5,11.5)(54,12)

\etaextn(45,3)(46,7)
\etaextn(45,9)(46,11)
\etaextn(46,11)(47,13)
\etaextn(47,10)(48,12)
\etaextn(52,11)(53,13)

\nuextn(45,3)(48,7)
\nuextn(45,4)(48,8)
\nuextn(45,9)(48,12)
\nuextn[linestyle=dashed](48,6)(51,9)
\nuextncurve(51,8)(51,8.5)(53.5,11)(54,11)

\scriptsize

\rput(44,-1){44}
\rput(46,-1){46}
\rput(48,-1){48}
\rput(50,-1){50}
\rput(52,-1){52}
\rput(54,-1){54}

\rput(43,0){0}
\rput(43,2){2}
\rput(43,4){4}
\rput(43,6){6}
\rput(43,8){8}
\rput(43,10){10}
\rput(43,12){12}
\rput(43,14){14}

\psline[linecolor=tauzerocolor](44,4)(44,5)
\psline[linecolor=tauzerocolor](44,4)(45.10,5)
\psline[linecolor=tauzerocolor](44,4)(47,5)
\pscircle*(44,4){\cirrad}
\uput{\cirrad}[-90](44,4){$g_2$}
\psline[linecolor=tauzerocolor](44,5)(44,6)
\pscircle*(44,5){\cirrad}
\pscircle*(44,6){\cirrad}
\psline[linecolor=tauzerocolor](45,3)(45,4)
\pscircle*(45,3){\cirrad}
\uput{\cirrad}[-90](45,3){$h_3^2 h_5$}
\pscircle*(45,4){\cirrad}
\psline[linecolor=tauzerocolor](44.90,5)(45,6)
\psline[linecolor=tauzerocolor](44.90,5)(46,6)
\psline[linecolor=tauzerocolor](44.90,5)(48,6)
\pscircle*(44.90,5){\cirrad}
\uput{\cirrad}[240](44.90,5){$h_5 d_0$}
\pscircle*(45.10,5){\cirrad}
\psline[linecolor=tauzerocolor](45,6)(45,7)
\pscircle*(45,6){\cirrad}
\pscircle*(45,7){\cirrad}
\pscircle*(45,9){\cirrad}
\uput{\cirrad}[-90](45,9){$w$}
\pscircle*(46,6){\cirrad}
\psline[linecolor=tauzerocolor](46,7)(47.10,8)
\pscircle*(46,7){\cirrad}
\uput{\cirrad}[240](46,7){$B_1$}
\pscircle*(46,8){\cirrad}
\uput{\cirrad}[225](46,8){$N$}
\pscircle*(46,11){\cirrad}
\uput{\cirrad}[-90](46,11){$d_0 l$}
\pscircle*(47,5){\cirrad}
\psline[linecolor=tauzerocolor](46.90,8)(48,9)
\pscircle*(46.90,8){\cirrad}
\uput{\cirrad}[120](46.90,8){$P h_5 c_0$}
\pscircle*(47.10,8){\cirrad}
\pscircle*(47,10){\cirrad}
\uput{\cirrad}[-90](47,10){$e_0 r$}
\pscircle*(47,13){\cirrad}
\uput{\cirrad}[0](47,13){$P u$}
\pscircle*(48,6){\cirrad}
\psline[linecolor=tauzerocolor](48,7)(48,8)
\psline[linecolor=tauzerocolor](48,7)(51,8)
\pscircle*(48,7){\cirrad}
\uput{\cirrad}[-60](48,7){$B_2$}
\pscircle*(48,8){\cirrad}
\pscircle*(48,9){\cirrad}
\pscircle*(48,12){\cirrad}
\uput{\cirrad}[180](48,12){$d_0 e_0^2$}
\psline[linecolor=tauzerocolor](50,4)(53,5)
\pscircle*(50,4){\cirrad}
\uput{\cirrad}[-90](50,4){$h_5 c_1$}
\psline[linecolor=tauzerocolor](50,6)(53,7)
\pscircle*(50,6){\cirrad}
\uput{\cirrad}[-90](50,6){$C$}
\psline[linecolor=tauzerocolor](51,5)(51,6)
\psline[linecolor=tauzerocolor](51,5)(52,6)
\pscircle*(51,5){\cirrad}
\uput{\cirrad}[240](51,5){$h_3 g_2$}
\pscircle*(51,6){\cirrad}
\pscircle*(51,8){\cirrad}
\pscircle*(51,9){\cirrad}
\uput{\cirrad}[-30](51,9){$g n$}
\pscircle*(52,6){\cirrad}
\pscircle*(52,8){\cirrad}
\uput{\cirrad}[210](52,8){$d_1 g$}
\pscircle*(52,11){\cirrad}
\uput{\cirrad}[180](52,11){$e_0 m$}
\pscircle*(53,5){\cirrad}
\pscircle*(53,7){\cirrad}
\psline[linecolor=tauzerocolor](53,10)(54,11)
\pscircle*(53,10){\cirrad}
\uput{\cirrad}[-90](53,10){$x'$}
\pscircle*(53,13){\cirrad}
\uput{\cirrad}[-60](53,13){$d_0 u$}
\pscircle*(54,9){\cirrad}
\uput{\cirrad}[-90](54,9){$h_0 h_5 i$}
\pscircle*(54,11){\cirrad}
\pscircle*(54,12){\cirrad}
\uput{\cirrad}[-90](54,12){$e_0^2 g$}

\end{pspicture}

 \pagebreak


We do not include elements in filtration higher than 14. Those elements are detected by the $K(1)$-local sphere, and are not relevant to our proof. Here we use colored lines to denote nontrivial extensions. For example, the line between $Pu$ and $e_0r$ indicates that $2\{e_0r\}$ is nontrivial and is detected by $Pu$. The 2, $\eta$ and $\nu$-extensions are completely known in this range except for a possible 2-extension from $h_0h_3g_2$ to $gn$ and a possible $\nu$-extension from $h_2h_5d_0$ to $gn$. We use dashed lines to denote them. In fact, Isaksen \cite{DI1} showed that these two possible extensions either both occur or neither occur. But these extensions are irrelevant to our purpose.

\section{A Toda bracket $\langle \theta_4, 2, \sigma^2\rangle$}

The following theorem is due to Isaksen \cite{DI1}. For completeness, we include the proof.

\begin{thm}
$\langle \theta_4, 2, \sigma^2\rangle$ contains an element of order $2$ that can be detected by $h_0h_4^3$.
\end{thm}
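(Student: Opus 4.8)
The plan is to detect the Toda bracket by a Massey product in the classical Adams spectral sequence and then read off the order of the resulting element from the $E_\infty$-chart. The detecting elements are $\theta_4=\{h_4^2\}$, $2=\{h_0\}$ and $\sigma^2=\{h_3^2\}$, and the bracket is defined since $2\theta_4=0$ and $2\sigma^2=0$. I would invoke Moss's convergence theorem, which---under an appropriate absence-of-crossing-differentials hypothesis---guarantees that $\langle\theta_4,2,\sigma^2\rangle$ contains an element detected by a corresponding Massey product $\langle h_4^2,h_0,h_3^2\rangle$ in $\mathrm{Ext}$.

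The essential subtlety, and the step I expect to be the main obstacle, is that the naive $E_2$-Massey product is not even defined: in $\mathrm{Ext}$ one has $h_0h_3^2\neq 0$ and $h_0h_4^2\neq 0$, these classes being killed only by the Adams differentials $d_2(h_4)=h_0h_3^2$ and $d_2(h_5)=h_0h_4^2$ (which is exactly what forces $2\sigma^2=0$ and $2\theta_4=0$). Hence the Massey product must be formed on the $E_3$-page and Moss's theorem applied with $r=3$; checking the crossing-differential hypothesis in this range and controlling the indeterminacy is where the real work lies. To evaluate the $E_3$-bracket I would use these two $d_2$-differentials as the nullhomotopies in a cobar-level defining system, together with the standard relation $h_3^2h_5=h_4^3$; this identifies the value of the bracket as $h_0h_4^3=h_0h_3^2h_5$ and produces an element $x\in\langle\theta_4,2,\sigma^2\rangle$ detected by $h_0h_4^3$ in filtration $4$ of the $45$-stem.

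It remains to check that $x$ has order $2$, which I would do directly from the chart. Since $h_0h_4^3\neq 0$ survives to $E_\infty$, the element $x$ is nonzero. Moreover $h_0\cdot h_0h_4^3=h_0^2h_4^3$ vanishes at $E_\infty$ because filtration $5$ of the $45$-stem is empty, and as all $2$-extensions are completely known in this range there is no hidden $2$-extension off $h_0h_4^3$; therefore $2x=0$. Combining, $x$ is a nonzero element of the bracket with $2x=0$, so it has order exactly $2$ and is detected by $h_0h_4^3$, as claimed.
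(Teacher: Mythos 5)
Your proposal fails at both of its key steps, and the way it fails is precisely the subtlety that forces the paper's longer argument. First, the Massey product you feed into Moss's theorem does not equal $h_0h_4^3$; it is zero. With the nullhomotopies coming from $d_2(h_5)=h_0h_4^2$ and $d_2(h_4)=h_0h_3^2$, the defining system evaluates to $h_4^2\cdot h_4+h_5\cdot h_3^2$, and the very relation you invoke, $h_4^3=h_3^2h_5$, makes the two terms cancel: $\langle h_4^2,h_0,h_3^2\rangle=0$ on the Adams $E_3$-page, in filtration $3$. (Note also the internal inconsistency: this Massey product lies in filtration $3$, while $h_0h_4^3$ lies in filtration $4$, so it could never equal $h_0h_4^3$.) Consequently Moss's theorem yields only a lower bound: the bracket contains an element of filtration at least $4$. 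This is perfectly compatible with $\langle\theta_4,2,\sigma^2\rangle$ containing $0$ --- which is exactly what Lin erroneously concluded --- so pinning down the detecting element needs genuinely more input. The paper supplies it multiplicatively: $\nu_4\theta_4$ lies in $\langle\sigma,\nu,\{x\}\rangle\subseteq\{h_2h_5d_0\}$ (using the $E_2$ Massey product $h_2h_5d_0=\langle h_3,h_2,x\rangle$), while $\nu_4\theta_4=\langle\theta_4,2,\sigma^2\rangle\nu$; since some element of the bracket has $\nu$-multiple detected in filtration $6$, the bracket contains an element of filtration at most $5$, and after ruling out $h_1g_2$ the class $h_0h_4^3$ is the only possibility left. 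Your proposal has no substitute for this step.

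Second, the order-$2$ argument ``read off the chart'' is also false. Filtration $5$ of the $45$-stem is not empty at $E_\infty$ (it contains $h_5d_0$ and $h_1g_2$), and, more importantly, there \emph{is} a hidden $2$-extension on $h_0h_4^3$: the chart shows a $2$-extension from the class at $(45,4)$ to the class $h_0h_5d_0$ at $(45,6)$, and Remark 3.3 spells out what it means --- if $\alpha$ is detected by $h_3^2h_5$, then $2\alpha$ is detected by $h_0h_4^3$ yet $4\alpha\neq 0$. So an element detected by $h_0h_4^3$ need not have order $2$; one of order $2$ exists only by the specific choice $2\alpha+\beta$ with $\beta=-\gamma$, where $\gamma$ is detected by $h_5d_0$ and $4\alpha=2\gamma$. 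The paper instead proves the order-$2$ statement by shuffling:
$$2\langle\theta_4,2,\sigma^2\rangle=\langle 2,\theta_4,2\rangle\sigma^2=\eta\theta_4\sigma^2=0,$$
with indeterminacy $2\sigma^2\pi_{31}=0$, so \emph{every} element of the bracket has order $2$. Both repairs --- the $\nu$-multiplication argument for detection and the shuffle for the order --- are needed to reach the theorem; neither can be replaced by the direct Moss-plus-chart reading you propose.
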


\begin{rem}
Before presenting the proof, we mention that the indeterminacy of this Toda bracket is well-known. Namely, it is the set $\{0, \rho_{15}\theta_4\}$, where $\rho_{15}$ is the generator of $ImJ$ in $\pi_{15}$, and is detected by $h_0^3h_4$. Furthermore, $\rho_{15}\theta_4\neq 0$ is detected by $h_0^2h_5d_0$. This is shown by Tangora in \cite{Tan}.
\end{rem}

\begin{proof}
In the Adams $E_3$-page, we have $\langle h_4^2, h_0, h_3^2\rangle=h_4^2h_4+h_5h_3^2=0$ in the Adams filtration 3. Therefore, by the Moss Theorem \cite{Moss}, there is an element in $\langle \theta_4, 2, \sigma^2\rangle$ that is detected by some element of filtration at least $4$. Since the nontrivial element in the indeterminacy has filtration 7, any element in $\langle \theta_4, 2, \sigma^2\rangle$ has filtration at least 4. We have
$$2\langle \theta_4, 2, \sigma^2\rangle=\langle2, \theta_4, 2\rangle\sigma^2=\eta\theta_4\sigma^2=0.$$
Note that the indeterminacy of $\langle2, \theta_4, 2\rangle\sigma^2$ is $2\sigma^2\pi_{31}=0$. Therefore, any element in $\langle \theta_4, 2, \sigma^2\rangle$ has order $2$.

Now consider the product $\nu_4\theta_4$.
$$\nu_4\theta_4=\langle\sigma,\nu,\sigma\rangle\theta_4 \subseteq\langle\sigma,\nu,\sigma\theta_4\rangle\subseteq\langle\sigma,\nu,\{x\}\rangle.$$
Here, since $2\theta_4=0$, we can ignore the difference between $\nu_4$, which is by definition $\langle\nu,\sigma,2\sigma\rangle$, and $\langle\sigma,\nu,\sigma\rangle=7\nu_4$. In the Adams $E_2$-page, we have $h_2h_5d_0=\langle h_3,h_2,x\rangle$ with zero indeterminacy. In fact, this follows from
$$h_2\langle h_3,h_2,x\rangle=\langle h_2,h_3,h_2\rangle x=h_3^2x=h_2^2h_5d_0.$$ Therefore, $\nu_4\theta_4$ is contained in $\langle\sigma,\nu,\{x\}\rangle\subseteq\{h_2h_5d_0\}$.

On the other side, $\nu_4\theta_4$ is contained in $\theta_4\langle2, \sigma^2, \nu\rangle=\langle\theta_4,2,\sigma^2\rangle\nu$. For the indeterminacy, note that $\rho_{15}\theta_4\nu=0.$ Therefore, we actually have $$\nu_4\theta_4=\langle\theta_4,2,\sigma^2\rangle\nu.$$
Combining this with the fact that $\nu_4\theta_4$ is also contained in $\{h_2h_5d_0\}$, we deduce that there exists an element in $\langle \theta_4, 2, \sigma^2\rangle$ such that $\nu$ times it is detected by $h_2h_5d_0$, which has filtration $6$. Therefore, $\langle \theta_4, 2, \sigma^2\rangle$ contains an element with filtration at most $5$. Furthermore, it cannot be detected by $h_1g_2$, which has filtration $5$, since otherwise the $\nu$ multiple won't be detected by $h_2h_5d_0$. Therefore, the statement of the theorem is the only possibility left.
\end{proof}

\begin{rem}
Another way to describe the statement of this theorem is the following:
$$\langle \theta_4, 2, \sigma^2\rangle \text{~contains an order 2 element of the form~} 2\alpha+\beta,$$
where $\alpha$ is detected by $h_3^2h_5$ and $\beta$ is detected by $h_5d_0$. Note that the nontrivial 2-extension in the 45-stem means that there exist elements $\alpha$ and $\gamma$, which are detected by $h_3^2h_5$ and $h_5d_0$ respectively, such that $4\alpha =  2\gamma$. Since $\gamma$ has order 8, one can choose $\beta$ to be $-\gamma=7\gamma$, so that $2\alpha+\beta$ has order 2.
\end{rem}

\begin{rem}
In \cite{Lin}, Lin showed that this bracket contains 0. The step that rules out the element Isaksen got is invalid. In \cite{Ko}, Kochman showed that this bracket contains $\eta\{g_2\}$ or 0. His argument failed because essentially of the inconsistency of the $\nu-$extension on $\{h_2h_5d_0\}$ and the $\sigma-$extension on $\{h_0^2g_2\}$, which allowed him to eliminate the right element. The inconsistency is discussed in \cite{DI1}.
\end{rem}

\section{More about the $45$-stem}

We first consider the Toda bracket $\langle \theta_4, 2, \kappa\rangle$ in $\pi_{45}$.

\begin{lem}
$\langle \theta_4, 2, \kappa\rangle$ contains an element of order $2$ that can be detected by $h_0h_4^3$.
\end{lem}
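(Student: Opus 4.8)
The plan is to follow the proof of Theorem 3.1 as closely as possible, with $\kappa$ (detected by $d_0$) playing the role of $\sigma^2$ (detected by $h_3^2$). As there, three things must be shown: that every element of $\langle\theta_4,2,\kappa\rangle$ has order dividing $2$, that no element lies in filtration below $4$, and that the bracket contains an element detected precisely by $h_0h_4^3$. I would first settle the order by the same shuffle, $2\langle\theta_4,2,\kappa\rangle=\langle 2,\theta_4,2\rangle\kappa=\eta\theta_4\kappa$, noting that the relevant indeterminacy vanishes because $2\kappa=0$. Thus everything reduces to checking that $\eta\theta_4\kappa=0$; since $\eta\theta_4$ is detected by $h_1h_4^2$, I would verify this at the level of $Ext$ via the product $h_1h_4^2d_0$, which I expect to vanish or be killed, forcing $0\in 2\langle\theta_4,2,\kappa\rangle$ and hence order at most $2$ throughout.

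This order bound already does a lot of the work of locating the detecting class. Any class supporting a nonzero $h_0$-multiplication detects an element of order at least $4$, so it cannot detect an element of our bracket; this immediately excludes $h_4^3$ in filtration $3$ (since $h_0h_4^3\neq 0$) and $h_5d_0$ in filtration $5$ (since $h_0h_5d_0\neq 0$). In particular the filtration is at least $4$, and because the only nonzero indeterminacy class $\rho_{15}\theta_4$ sits in filtration $7$, every element of the bracket has filtration at least $4$. What remains is to exhibit an element detected \emph{exactly} by $h_0h_4^3$ and to exclude the one surviving filtration-$5$ class $h_1g_2$, which has $h_0h_1g_2=0$ and so is not eliminated by the order argument.

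Here the argument must diverge from Theorem 3.1. Because $d_0$ lies in filtration $4$ rather than $2$, the naive algebraic bracket $\langle h_4^2,h_0,d_0\rangle$ lands in filtration $6$, so Moss's theorem by itself cannot produce the low-filtration class $h_0h_4^3$; the filtration-$4$ element appears only through a genuine filtration drop. I would produce it by a multiplicative shuffle, in the spirit of the identity $\nu_4\theta_4=\langle\theta_4,2,\sigma^2\rangle\nu$ used for $\sigma^2$: multiply the bracket by a suitable element, shuffle it onto a product whose detecting class is already known, and use the filtration of that product to pin the bracket element to filtration $4$ while simultaneously ruling out $h_1g_2$. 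The unique filtration-$4$ class in this stem is $h_0h_4^3$, so this finishes the identification.

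The main obstacle is exactly this last step. For $\sigma^2$ the relevant classes sat in the filtrations that make the $\nu$-multiplication transparent, whereas the two-filtration gap for $\kappa$ means I must first locate the correct auxiliary product or Toda bracket that realizes the drop, and then read off the $2$-, $\eta$-, and $\nu$-extensions from Isaksen's chart carefully enough to separate $h_0h_4^3$ from $h_1g_2$. I expect the two delicate inputs to be the vanishing $\eta\theta_4\kappa=0$ and the precise detecting class of the auxiliary product; once these are established, the argument should close exactly as in Theorem 3.1.
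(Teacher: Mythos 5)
There is a genuine gap, and it sits exactly where you locate it yourself: the positive step of producing an element of $\langle\theta_4,2,\kappa\rangle$ detected by $h_0h_4^3$ is never carried out. Your order-$2$ shuffle and the exclusion of $h_3^2h_5$ and $h_5d_0$ via $h_0$-extensions only bound the bracket from below in filtration; they cannot show the bracket actually meets $\{h_0h_4^3\}$ (a priori everything in it could be detected by $h_1g_2$, by $w$, or be zero). You defer this to an unspecified ``auxiliary product or Toda bracket that realizes the drop,'' which is precisely the missing idea.

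Moreover, your reason for abandoning the Moss approach is a misconception, and correcting it recovers the paper's actual proof. The ``naive'' $E_2$-page Massey product $\langle h_4^2,h_0,d_0\rangle$ does not land in filtration $6$ --- it is not even defined, since $h_0d_0\neq 0$ in $Ext$. What makes the bracket defined is the Adams differential $d_3(h_0h_4)=h_0d_0$: on the $E_4$-page one may take the nullhomotopy of $h_0\cdot d_0$ to be $h_0h_4$, which lies in filtration $2$, so the Massey product formed with respect to $d_3$ equals $h_4^2\cdot h_0h_4=h_0h_4^3$ in filtration $4$. The filtration drop you believed to be beyond the reach of Moss's theorem is exactly what Massey products with respect to higher differentials produce, and Moss's convergence theorem applies to them, immediately yielding an element of $\langle\theta_4,2,\kappa\rangle$ detected by $h_0h_4^3$. (Separately, your verification that $\eta\theta_4\kappa=0$ is also incomplete as stated: vanishing of $h_1h_4^2d_0$ in $Ext$ only forces higher filtration. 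The paper instead shows $\kappa\theta_4=0$ outright, since $d_0h_4^2=0$ in $Ext^6$ and the $44$-stem of the $E_\infty$-page contains nothing in filtration $7$ or above; this gap is fillable, but it needs to be filled.)
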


\begin{proof}
The Adams differential $d_3(h_0h_4)=h_0d_0$ implies that in the Adams $E_4$-page, $\langle h_4^2, h_0, d_0\rangle = h_0h_4^3$ in the Adams filtration 4. Then by the Moss convergence theorem \cite{Moss}, there is an element in $\langle \theta_4, 2, \kappa\rangle$ that is detected by $h_0h_4^3$. From
$$2\langle \theta_4, 2, \kappa\rangle= \langle 2, \theta_4, 2\rangle\kappa=\eta\theta_4\kappa=0,$$
we know that any element in $\langle \theta_4, 2, \kappa\rangle$ has order $2$. The indeterminacy of $\langle 2, \theta_4, 2\rangle\kappa$ is $2\kappa\pi_{31}=0$. Here we also used that $\kappa\theta_4=0$, which is known for filtration reasons. In fact, since $d_0h_4^2=0$ in $Ext^6$, $\kappa\theta_4$ must be detected by an element of filtration at least 7. However, in the 44-stem of the $E_{\infty}$-page, there are no elements of filtration 7 or higher. Therefore $\langle \theta_4, 2, \kappa\rangle$ contains an element of order $2$ that can be detected by $h_0h_4^3$.
\end{proof}

\begin{rem}
The indeterminacy of this bracket is the same as that of $\langle\theta_4, 2, \sigma^2\rangle$, i.e., $\{0, \rho_{15}\theta_4\}$. In fact, $\pi_{31}$ is generated by $\eta\theta_4, \{n\}$ and $\rho_{31}$, where $\rho_{31}$ is the generator of $ImJ$ in $\pi_{31}$, and is detected by $h_0^{10}h_5$. Since $\kappa\theta_4=0$, $\eta\kappa\theta_4=0$. Again for filtration reasons, $\kappa\{n\}=0$ and $\kappa\rho_{31}=0$. Therefore $\kappa\pi_{31}=0$. This shows that the indeterminacy of $\langle \theta_4, 2, \kappa\rangle$ is $\{0, \rho_{15}\theta_4\}$.
\end{rem}

Although both $\langle \theta_4, 2, \kappa\rangle$ and $\langle \theta_4, 2, \sigma^2\rangle$ contain an element of order $2$ that can be detected by $h_0h_4^3$, we do not necessarily know if they have an element in common. The following theorem confirms that they do.

Now we restate Theorem 2.1.

\begin{thm}
$\langle \theta_4, 2, \sigma^2+\kappa\rangle$ contains $0$ with indeterminacy $\{0, \rho_{15}\theta_4\}$.
\end{thm}

We need the following lemma to prove the theorem.

\begin{lem}
$\sigma^2\pi_{33}=0$.
\end{lem}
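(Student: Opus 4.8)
The plan is to reduce the statement to a finite check over a generating set of $\pi_{33}$ and then annihilate each product by combining an $Ext$-level computation with the sparseness of the $47$-stem. Working $2$-locally, note first that $\sigma^2$ has order $2$ in $\pi_{14}$, so it kills any odd-torsion class automatically; hence it suffices to treat the $2$-primary part of $\pi_{33}$. I would record a generating set for this group from the known calculations in this range (Kochman's tables, or Isaksen's charts), together with the $Ext$ class $a$ detecting each generator $x$ and the order of $x$. Since $\sigma^2$ is detected by $h_3^2$ in Adams filtration $2$, the product $\sigma^2 x \in \pi_{47}$, if nonzero, is detected either by $h_3^2 a$ in filtration $s+2$, or, when $h_3^2 a = 0$ in $Ext$, by some class of strictly higher filtration in the $47$-stem. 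The first pass is therefore to compute $h_3^2 a$ in the Adams $E_2$-page for each detecting class; in this range most of these products already vanish because of the standard relations among the $h_i$, $c_i$, $d_0$, and the $P$-power classes.

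For the generators that are $\eta$- or $\nu$-multiples, multiplicativity is the cleanest tool. A relation $x = \nu y$ gives $\sigma^2 x = \nu(\sigma^2 y)$ with $\sigma^2 y$ in a lower stem, where the product is either already recorded in the tables or forced to vanish; and because each generator of the $2$-primary $\pi_{33}$ has order $2$, each $\sigma^2 x$ has order dividing $2$, which immediately discards any candidate target supporting a nontrivial $h_0$-extension. For the $\mu$-family generator $\mu_{33}$ (detected by $P^4 h_1$) I would compute $h_3^2 \cdot P^4 h_1$ directly; the $P$-power relations make such products vanish in $Ext$, leaving only higher-filtration targets to exclude. Where a generator is naturally expressed as a Toda bracket, shuffling $\sigma^2$ into the bracket and applying the relations assembled in Section~3 is an additional lever.

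The main obstacle is the residual set of cases in which $h_3^2 a = 0$ in $Ext$ but the $47$-stem nevertheless contains nonzero classes of higher Adams filtration that could a priori detect $\sigma^2 x$. Here $Ext$ alone is insufficient, and I would eliminate the higher-filtration targets by hand: using that $\sigma^2 x$ has order at most $2$ to rule out classes carrying nontrivial $h_0$-extensions, using compatibility with $\eta$- and $\nu$-multiplication (the product must itself be an $\eta$- or $\nu$-multiple of the appropriate type) to rule out classes that are not such multiples, and, when needed, invoking the explicit $2$-, $\eta$- and $\nu$-extension data displayed in Isaksen's chart for the $47$-stem. I expect this filtration-jumping bookkeeping, rather than any single deep input, to be the technical heart of the argument.
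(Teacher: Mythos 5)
Your skeleton matches the paper's: reduce to a generating set of $\pi_{33}$ (the paper uses $\eta\eta_5$, $\nu\theta_4$, $\eta\{q\}$, $\eta^2\rho_{31}$, $\{P^4h_1\}$), dispose of the decomposable generators by multiplicativity, and treat the $\mu$-family generator $\{P^4h_1\}$ separately. One small inefficiency: for the $\eta$- and $\nu$-multiples you do not need to chase $\sigma^2 y$ into lower stems or consult tables at all, since the relations $\eta\sigma^2=0$ and $\nu\sigma^2=0$ kill those products outright; this is all the paper does for four of the five generators.

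The genuine gap is exactly where you predict the ``technical heart'' to be, and the tools you list cannot close it. For $x=\{P^4h_1\}$ you correctly note $h_3^2\cdot P^4h_1=0$ in $Ext$, so $\sigma^2 x$, if nonzero, must be detected in very high Adams filtration in the $47$-stem. But the part of $\pi_{47}$ above filtration $14$ is precisely what Isaksen's chart (as reproduced in the paper) omits: since $47\equiv 7 \pmod 8$, it is the image-of-$J$ summand $\mathbb{Z}/32$ generated by $\rho_{47}$, detected by the $K(1)$-local sphere. Every one of your tests is silent on the element $16\rho_{47}$ at the top of that tower: it has order exactly $2$, so the $h_0$-extension argument does not discard it; it is annihilated by $\eta$ (indeed $\eta\cdot 16\rho_{47}=16\,\eta\rho_{47}=0$ because $\eta\rho_{47}$ has order $2$), so the constraint $\eta(\sigma^2x)=(\eta\sigma^2)x=0$ is consistent with it, and similarly for $\nu$; and the $2$-, $\eta$-, $\nu$-extension data you want to quote simply is not displayed in the relevant filtration range. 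So your bookkeeping terminates with the undecided possibility $\sigma^2\{P^4h_1\}=16\rho_{47}$. The paper closes this case with an input your proposal lacks: the image-of-$J$ relation $\{P^4h_1\}\sigma=\eta\rho_{39}$ (with $\rho_{39}$ detected by $P^2h_0^2i$), after which $\{P^4h_1\}\sigma^2=\eta(\rho_{39}\sigma)$ vanishes because $\rho_{39}\sigma$ lies in high filtration of the $46$-stem, which really is empty there (stems congruent to $6$ mod $8$ carry no $K(1)$-local classes). An alternative patch with the same flavor: map to the $K(1)$-local sphere (or the $j$-spectrum), where the image of $J$ injects but $\sigma^2$ maps to zero since $\pi_{14}$ of that spectrum vanishes; since $16\rho_{47}$ survives there, it cannot be $\sigma^2\{P^4h_1\}$. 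Without some such input beyond $Ext$ and the truncated chart, the argument does not go through.
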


\begin{proof}
We know that $\pi_{33}$ is generated by $\eta\eta_5$, $\nu\theta_4$, $\eta\{q\}$, $\eta^2\rho_{31}$ and $\{P^4h_1\}$. Since $\eta\sigma^2=0$ and $\nu\sigma^2=0$, we only need to show that $\{P^4h_1\}\sigma^2$=0. In fact, we have
$$\{P^4h_1\}\sigma^2=\eta\rho_{39}\sigma=0$$
for filtration reasons. Here $\rho_{39}$ is the generator of $ImJ$ in $\pi_{39}$, and is detected by $P^2h_0^2i$. Therefore, $\sigma^2\pi_{33}=0$.
\end{proof}

Now we present the proof of Theorem 4.3.

\begin{proof}
The indeterminacy is straightforward, as in Remark 4.2.

Since all elements in $\langle \theta_4, 2, \kappa\rangle$ and $\langle \theta_4, 2, \sigma^2\rangle$ have order $2$ and can be detected by $h_0h_4^3$ in the Adams filtration 4, elements in $\langle \theta_4, 2, \sigma^2+\kappa\rangle$ must be detected by elements of filtration at least $5$ and have order $2$. To prove the theorem, we need to rule out both $\{w\}$ and $\eta\{g_2\}$.

For $\{w\}$, by Lemma 4.4, we have that
$$\eta^2 \langle\theta_4, 2, \sigma^2\rangle  =  \langle\eta^2, \theta_4, 2\rangle \sigma^2 \in \pi_{33} \sigma^2 = 0.$$
Next we have that
$$\eta^2 \langle\theta_4, 2, \kappa\rangle  =  \theta_4 \langle 2, \kappa, \eta^2\rangle.$$

In the Adams $E_4$-page, we have that $\langle h_0, d_0, h_1^2\rangle= h_0h_4h_1^2 =0$ in the Adams filtration 4. Then the Moss Theorem tells us that $\langle 2,\kappa,\eta^2\rangle$ might contain a nontrivial element of higher filtration, namely a combination of $\nu\kappa, \eta^2\rho_{15}$ and $\{P^2h_1\}$. Note that we have that $\nu\kappa\theta_4=0$ and by Lemma 6.1 we have that $\eta^2\rho_{15}\theta_4=0$. To show that $\{P^2h_1\}\theta_4=0$, we first show that $\{Ph_1\}\theta_4=0$.\\

In fact, $\{Ph_1\}\theta_4\in \langle\eta,8\sigma, 2\rangle\theta_4 = \eta \langle 8\sigma, 2, \theta_4\rangle$, which contains $0$. This holds since $\eta \langle 8\sigma, 2, \theta_4\rangle$ intersects $\eta\{h_0^3h_3h_5\}$, which contains a single element zero. The indeterminacy is $\eta \pi_8 \theta_4 =0$. This gives that $\{Ph_1\}\theta_4=0$. Then we have

$$\{P^2h_1\}\theta_4 \in \theta_4 \langle \{Ph_1\}, 2, 8\sigma\rangle=\langle\theta_4, \{Ph_1\}, 2\rangle 8\sigma \subseteq \pi_{40} 8 \sigma=0.$$

Therefore, no matter what $\langle 2,\kappa,\eta^2\rangle$ equals, we always have that
$$\eta^2 \langle\theta_4, 2, \kappa\rangle  = \langle 2,\kappa, \eta^2\rangle\theta_4 \text{~contains~}0.$$

The indeterminacy of $\eta^2 \langle\theta_4, 2, \kappa\rangle$ is zero since $\eta^2\theta_4=0$ and $\eta^2\kappa=0$. Then
$$\eta^2 \langle\theta_4, 2, \kappa\rangle  =0.$$
Therefore,
$$\eta^2 \langle\theta_4, 2, \sigma^2+\kappa\rangle =0.$$
Then the fact that $\eta^2\{w\}\neq 0$ rules out $\{w\}$, since otherwise we would have that $\eta^2\langle\theta_4, 2, \sigma^2+\kappa\rangle = \eta^2 \{w\}\neq 0.$\\

For $\eta\{g_2\}$, first note that $\sigma\eta\{g_2\}\neq0$ is detected by $h_1h_3g_2$. We have that
$$\langle\theta_4, 2, \kappa\rangle \sigma = \theta_4 \langle 2, \kappa, \sigma\rangle \subseteq \theta_4\pi_{22} = 0.$$
In fact, $\pi_{22}$ is generated by $\nu\overline{\sigma}$ and $\eta^2\overline{\kappa}$. We have that $\eta^2\overline{\kappa}\theta_4=0$ and $\nu\overline{\sigma}\theta_4=0$ for filtration reasons. As a remark, we can actually prove that $\langle 2, \kappa, \sigma\rangle = \nu\overline{\sigma}$ by studying the cofiber of 2, but we don't need this fact here.

On the other side, as explained in Remark 3.3, $\langle\theta_4, 2, \sigma^2\rangle$ contains $2 \alpha + \beta.$ Therefore,
$$\langle\theta_4, 2, \sigma^2\rangle\sigma \text{~contains~} 2\alpha\sigma+\beta\sigma.$$

We have that $2\alpha\sigma \in 2 \pi_{52}=0$. In the Adams $E_3$-page, we compute directly that
$\langle h_0, h_4^2, d_0\rangle= h_5d_0$. Then Moss's Theorem shows that $\langle 2, \theta_4, \kappa\rangle$ contains an element that equals to $\beta$ plus possibly higher filtration terms. Note that $\sigma\{w\}=0$ by using tmf. In fact, if $\sigma\{w\}\neq 0$, the only possibility is that $\sigma\{w\}$ is detected by $\{e_0 m\}$. This implies that $\eta\sigma\{w\}=\kappa\{u\}$ because of the two nontrivial $\eta$-extensions. Since both $\eta\{w\}$ and $\kappa\{u\}$ are detected by tmf and $\sigma=0$ in $\pi_\ast tmf$, mapping this relation into tmf gives a contradiction. Besides, from tmf, we know that $\{d_0l\}$ detects $\kappa\{q\}$, then the contradiction also follows from $\kappa\sigma=0$. See \cite{Bau},\cite{He} for example. 

Then we have that
$$\beta\sigma \in \langle 2, \theta_4, \kappa\rangle \sigma = 2 \langle\theta_4, \kappa, \sigma\rangle \subseteq 2 \pi_{52}=0.$$
Therefore, $\langle\theta_4, 2, \sigma^2\rangle\sigma$ contains $2 \alpha\sigma + \beta\sigma=0$. Note that $\rho_{15}\theta_4\sigma\in\theta_4\pi_{22}=0$, the indeterminacy is hence zero. Then we have that
$$\langle\theta_4,2,\sigma^2\rangle\sigma=0.$$

Therefore,
$$\langle\theta_4,2,\sigma^2+\kappa\rangle\sigma=0. $$
Combined with the fact that $\eta\{g_2\}\sigma\neq 0$, this rules out $\eta\{g_2\}$.\\

This completes the proof.

\end{proof}

\begin{rem}
$\sigma^2+\kappa$ is another element in $\pi_{14}$ that deserves to be called $\theta_3$.
\end{rem}

\begin{rem}
We can actually show that the bracket $\langle 2, \theta_4, \eta^2\rangle$ contains $\eta\eta_5+\nu\theta_4$ with indeterminacy $\{0, \eta^2\rho_{31}\}$.
\end{rem}

\section{A modified $4$-fold Toda bracket for $\theta_4$}

We have the following well-known $4$-fold Toda brackets for $\theta_4$. See \cite{BMT},\cite{Ko},\cite{Ko2} for example.
\begin{equation*}
\begin{split}
\theta_4 & = \langle 2, \sigma^2, 2, \sigma^2 \rangle \\
& = \langle 2, \sigma^2, \sigma^2, 2 \rangle \\
& = \langle 2\sigma, \sigma, 2\sigma, \sigma \rangle \\
& = \langle 2, \sigma^2, 2 \sigma, \sigma \rangle
\end{split}
\end{equation*}

All of them have zero indeterminacy. This is partially discussed in \cite{BMT},\cite{Ko},\cite{Ko2}. For completeness, we include a proof here.

\begin{lem}
All four Toda brackets above have zero indeterminacy.
\end{lem}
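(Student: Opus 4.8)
The plan is to show that each of the four $4$-fold Toda brackets
$$\langle 2, \sigma^2, 2, \sigma^2 \rangle,\quad \langle 2, \sigma^2, \sigma^2, 2 \rangle,\quad \langle 2\sigma, \sigma, 2\sigma, \sigma \rangle,\quad \langle 2, \sigma^2, 2\sigma, \sigma \rangle$$
has trivial indeterminacy by analyzing the general formula for the indeterminacy of a $4$-fold bracket. Recall that for a $4$-fold Toda bracket $\langle a, b, c, d \rangle$ living in $\pi_{n}$, the indeterminacy is a sum of contributions: the ordinary terms $a \cdot \pi_\ast + \pi_\ast \cdot d$ coming from the two outer entries, together with the subtler contributions coming from the $3$-fold subbrackets, namely terms of the form $\langle a, b, c \rangle \cdot \pi_\ast$ and $\pi_\ast \cdot \langle b, c, d \rangle$. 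First I would write down explicitly, for each of the four brackets, which homotopy groups these various factors land in, using that $\sigma^2 \in \pi_{14}$, $2\sigma, \sigma \in \pi_7$, $2 \in \pi_0$, and that the total bracket lives in $\pi_{30}$.

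Next I would dispatch the outer-entry contributions. For the brackets beginning with $2$ (all but the third), the leftmost contribution is $2 \cdot \pi_{30}$; for the third bracket it is $2\sigma \cdot \pi_{23}$. On the right, the contributions are $\pi_{16} \cdot \sigma$, $\pi_{29} \cdot 2$, or $\pi_{28} \cdot \sigma^2$ depending on the rightmost entry. I would argue each of these vanishes: $2\theta_4 = 0$ and $\theta_4$ being the unique nonzero class of its description in $\pi_{30}$ forces $2\pi_{30}$ to avoid the relevant filtration, and similarly the right-hand factors are killed either because $\sigma$ or $\sigma^2$ annihilates the relevant stems for filtration reasons, or because the image lands in a group with no class of the correct Adams filtration to detect $\theta_4$. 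The cleanest uniform approach is to note that any indeterminacy element is detected by something of filtration strictly different from that of $h_4^2$, so I would track filtrations carefully rather than computing each group abstractly.

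The main obstacle will be the middle contributions $\langle a, b, c \rangle \cdot \pi_\ast$ and $\pi_\ast \cdot \langle b, c, d \rangle$ coming from the $3$-fold subbrackets, since these are genuinely $4$-fold phenomena and cannot be read off from the outer entries alone. For these I would identify each relevant $3$-fold subbracket as a specific known class (for instance $\langle 2, \sigma^2, 2 \rangle = \eta\sigma^2$ and its analogues) and then show that multiplying by the remaining group gives zero. The hard part is that some of these subbrackets are themselves only known up to indeterminacy, so I must verify that the \emph{entire coset}, times the relevant homotopy group, vanishes; I expect this to reduce once more to filtration bounds in the $30$-stem, using the explicit chart reproduced in Section $2$ together with the relations $\eta\theta_4 \neq 0$ but $\eta\sigma^2 = 0$, $\nu\sigma^2 = 0$. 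Once every contribution is shown to vanish, the four indeterminacies are all zero and the lemma follows.
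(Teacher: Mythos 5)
Your structural formula for the indeterminacy of a $4$-fold bracket is incomplete, and the piece you are missing is exactly where the paper has to do real work. For a strictly defined bracket $\langle \alpha_1,\alpha_2,\alpha_3,\alpha_4\rangle$ with $\alpha_i\in\pi_{n_i}$, the indeterminacy is contained in the union of \emph{three} types of $3$-fold brackets,
$$\langle \alpha_1, \alpha_2, \pi_{n_3+n_4+1}\rangle,\qquad \langle \alpha_1, \pi_{n_2+n_3+1}, \alpha_4\rangle,\qquad \langle \pi_{n_1+n_2+1}, \alpha_3, \alpha_4\rangle,$$
coming from varying the nullhomotopies of the three consecutive products $\alpha_1\alpha_2$, $\alpha_2\alpha_3$, $\alpha_3\alpha_4$. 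Your outer terms $a\cdot\pi_\ast$ and $\pi_\ast\cdot d$ are subsumed in the first and third types (take the varying entry to be $0$), and your terms $\langle a,b,c\rangle\cdot\pi_\ast$, $\pi_\ast\cdot\langle b,c,d\rangle$ are harmless extras, but you have no term at all of the middle type $\langle \alpha_1, \pi_{n_2+n_3+1}, \alpha_4\rangle$. For the four brackets at hand this omits $\langle 2,\pi_{15},\sigma^2\rangle$, $\langle 2,\pi_{29},2\rangle$, $\langle 2\sigma,\pi_{15},\sigma\rangle$ and $\langle 2,\pi_{22},\sigma\rangle$ --- half of the eight brackets the paper has to kill. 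Worse, the last of these is precisely the one that cannot be dispatched by your ``track the filtrations'' strategy: $\pi_{22}$ is generated by $\nu\overline{\sigma}$ and $\eta^2\overline{\kappa}$, and the paper kills $\langle 2,\pi_{22},\sigma\rangle$ not by a filtration bound but by the shuffles $\langle 2, \nu\overline{\sigma}, \sigma\rangle=\langle 2, \overline{\sigma}, \nu\sigma\rangle=\langle 2,\overline{\sigma},0\rangle=0$ and $\langle 2, \eta^2\overline{\kappa}, \sigma\rangle=\langle 2, \eta^2, \overline{\kappa}\sigma\rangle=\langle 2,\eta^2,0\rangle=0$, using $2\overline{\sigma}=0$ and $\overline{\kappa}\sigma=0$.

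The part of your plan that does align with the paper is the endgame: since $\pi_{30}\cong\mathbb{Z}/2$ is generated by the indecomposable $\theta_4$, detected by $h_4^2$ in filtration $2$, it suffices to show no contribution can contain $\theta_4$, and for six of the eight brackets the paper does this via Moss's theorem --- on the $E_3$-page when $\pi_{15}$ sits in an outer slot, and on the $E_2$-page for $\langle 2,\pi_{15},\sigma^2\rangle$ and $\langle 2\sigma,\pi_{15},\sigma\rangle$ --- using that $\pi_{15}$ is generated in filtration at least $4$, while $\langle 2,\pi_{29},2\rangle=0$ because $\pi_{29}=0$. Note also that your right-hand outer contributions are mis-indexed: to land in $\pi_{30}$ they must be $\pi_{23}\cdot\sigma$, $\pi_{30}\cdot 2$ and $\pi_{16}\cdot\sigma^2$, not $\pi_{16}\cdot\sigma$, $\pi_{29}\cdot 2$ and $\pi_{28}\cdot\sigma^2$. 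As it stands, without the middle-slot brackets --- and in particular without any argument for $\langle 2,\pi_{22},\sigma\rangle$ --- the proposal does not prove the lemma.
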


\begin{proof}
In general, suppose a 4-fold Toda bracket $\langle \alpha_1, \alpha_2, \alpha_3, \alpha_4\rangle$ is defined, where $\alpha_i\in\pi_{n_i}$. Then its indeterminacy is contained in the union of three types of 3-fold Toda brackets:
$$\langle \alpha_1, \alpha_2, \pi_{n_3+n_4+1}\rangle, \langle \alpha_1, \pi_{n_2+n_3+1}, \alpha_4\rangle \text{~and~} \langle \pi_{n_1+n_2+1}, \alpha_3, \alpha_4\rangle. $$
In our case, the indeterminacy for all of them is contained in the union of the following eight brackets:
$$\langle\pi_{15},2, \sigma^2\rangle, \langle 2, \pi_{15}, \sigma^2\rangle, \langle 2, \sigma^2, \pi_{15}\rangle, \langle 2, \pi_{29}, 2\rangle, $$
$$\langle \pi_{15}, 2\sigma, \sigma\rangle, \langle 2\sigma, \pi_{15}, \sigma\rangle, \langle 2\sigma, \sigma, \pi_{15}\rangle, \langle 2, \pi_{22}, \sigma\rangle.$$
We will show that they are all zero. Note that $\pi_{30}\cong \mathbb{Z}/2$ and is generated by $\theta_4$, which is indecomposable. So for each of them, we only need to show that it does not contain $\theta_4$. They all follow for filtration reasons.

For $\langle\pi_{15},2, \sigma^2\rangle,\langle 2, \sigma^2, \pi_{15}\rangle,\langle \pi_{15}, 2\sigma, \sigma\rangle$ and $\langle 2\sigma, \sigma, \pi_{15}\rangle$, the corresponding Massey products are all well-defined on the Adams $E_3$-page. Since $\pi_{15}$ is generated by elements of filtration at least 4, the Massey products all take values in filtration at least 5. Therefore, by the Moss Theorem, all of them are all zero.

For $\langle 2, \pi_{15}, \sigma^2\rangle$ and $\langle 2\sigma, \pi_{15}, \sigma\rangle$, the corresponding Massey products are all well-defined on the Adams $E_2$-page. Since $\pi_{15}$ is generated by elements of filtration at least 4, the Massey products all take values in filtration at least 6. Therefore, by the Moss Theorem, all of them are all zero.

For $\langle 2, \pi_{22}, \sigma\rangle$, there are essentially two Toda brackets to check: $\langle 2, \nu\overline{\sigma}, \sigma\rangle$ and $\langle 2, \eta^2\overline{\kappa}, \sigma\rangle$, where $\nu\overline{\sigma}$ is detected by $h_2c_1$. Both brackets have zero indeterminacy. We have that
$$\langle 2, \nu\overline{\sigma}, \sigma\rangle=\langle 2, \overline{\sigma}, \nu\sigma\rangle=\langle 2, \overline{\sigma}, 0\rangle=0,$$
and that
$$\langle 2, \eta^2\overline{\kappa}, \sigma\rangle=\langle 2, \eta^2, \overline{\kappa}\sigma\rangle=\langle 2, \eta^2, 0\rangle=0.$$
Here we used the fact that $2 \overline{\sigma}=0$ and $\overline{\kappa}\sigma=0$.

At last, $\langle 2, \pi_{29}, 2\rangle=0$, since $\pi_{29}=0$. This completes the proof.
\end{proof}

Now we prove a modified $4$-fold Toda bracket based on the last one. Again, note that $\pi_{30}\cong \mathbb{Z}/2$ and is generated by $\theta_4$.

\begin{thm}
$\theta_4 = \langle 2, \sigma^2 + \kappa, 2 \sigma, \sigma \rangle$ with zero indeterminacy.
\end{thm}

\begin{proof}
We have $\langle\sigma^2 + \kappa, 2 \sigma, \sigma\rangle \subseteq \pi_{29} =0$. And
$$\langle 2, \sigma^2 + \kappa, 2 \sigma\rangle\supseteq\langle 2, \sigma^2 + \kappa, 2\rangle\sigma\ni\eta (\sigma^2 + \kappa) \sigma =0.$$
The indeterminacy of the bracket $\langle 2, \sigma^2 + \kappa, 2 \sigma\rangle$ is $2\pi_{22}+2\sigma\pi_{15}=0$, and we have $\langle 2, \sigma^2 + \kappa, 2 \sigma\rangle=0$. Therefore, this $4$-fold Toda bracket is strictly defined, and the indeterminacy is
$$\langle 2, \sigma^2 + \kappa, \pi_{15}\rangle + \langle 2, \pi_{22}, \sigma\rangle + \langle \pi_{15}, 2 \sigma, \sigma \rangle.$$
Note that $\langle 2, \sigma^2 + \kappa, \pi_{15}\rangle = 0$ for filtration reasons as in the proof of Lemma 5.1. The other two parts of the indeterminacy follow from the indeterminacy of $\langle 2, \sigma^2, 2 \sigma, \sigma \rangle$, which we know is zero. Then the theorem follows from the next lemma and the fact that $\theta_4 = \langle 2, \sigma^2, 2 \sigma, \sigma \rangle$.
\end{proof}

\begin{lem}
$\langle 2, \kappa, 2\sigma, \sigma \rangle=0$ with zero indeterminacy.
\end{lem}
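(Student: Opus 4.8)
The plan is to follow the same Adams--spectral--sequence philosophy used in the proof of Lemma 4.1: identify the (matric) Massey product that detects the Toda bracket, observe that it is forced into Adams filtration $\geq 3$, and then conclude by the Moss convergence theorem together with the fact that $\pi_{30}\cong\mathbb{Z}/2\{\theta_4\}$ is concentrated in filtration $2$. First I would record the degrees: with $2\in\pi_0$, $\kappa\in\pi_{14}$, $2\sigma,\sigma\in\pi_7$, the bracket lies in $\pi_{30}$, and since $\theta_4$ is detected by $h_4^2$ in filtration $2$, the unique element of $\pi_{30}$ of filtration $\geq 3$ is $0$. Thus it suffices to produce an element of the bracket of filtration $\geq 3$ and to check that the indeterminacy vanishes.

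Before the main point I would verify that the $4$-fold bracket is defined. The consecutive products vanish: $2\kappa=0$, $\kappa\cdot 2\sigma=2\kappa\sigma=0$ since $\kappa\sigma=0$, and $2\sigma\cdot\sigma=2\sigma^2=0$. The two sub-$3$-fold brackets contain $0$, namely $\langle\kappa,2\sigma,\sigma\rangle\subseteq\pi_{29}=0$ and $\langle 2,\kappa,2\sigma\rangle\supseteq\langle 2,\kappa,2\rangle\sigma\ni\eta\kappa\sigma=0$. For the indeterminacy, which is contained in $\langle 2,\kappa,\pi_{15}\rangle+\langle 2,\pi_{22},\sigma\rangle+\langle\pi_{15},2\sigma,\sigma\rangle$, the last two summands are precisely those shown to vanish in the proof of Lemma 5.1, while $\langle 2,\kappa,\pi_{15}\rangle=0$ for the same filtration reason: every representative is the filtration-$2$ witness $h_0h_4$ of $h_0d_0$ times a class of $\pi_{15}$ of filtration $\geq 4$, hence sits in filtration $\geq 6$.

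For the core of the argument I would pass to the Adams $E_4$-page, where $\kappa$ is detected by $d_0$ (filtration $4$) and $2\sigma$ by $h_0h_3$ (filtration $2$). The product $h_0h_3\cdot h_3=h_0h_3^2$ is killed by $d_2(h_4)=h_0h_3^2$, and the product $h_0\cdot d_0=h_0d_0$ is killed by $d_3(h_0h_4)=h_0d_0$, the very differential exploited in Lemma 4.1; so on $E_4$ all the relevant products vanish and the matric Massey product $\langle h_0,d_0,h_0h_3,h_3\rangle$ is defined there, with the nullhomotopy of $h_0h_3^2$ witnessed by $h_4$ (filtration $1$) and that of $h_0d_0$ witnessed by $h_0h_4$ (filtration $2$). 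The lowest-filtration term of the resulting bracket value is the cross term $h_0h_4\cdot h_4=h_0h_4^2$, which lies in filtration $3$; the remaining terms are $h_0$ times a nullhomotopy of $\langle d_0,h_0h_3,h_3\rangle$ (a class of stem $29$) and a nullhomotopy of $\langle h_0,d_0,h_0h_3\rangle$ (stem $23$) times $h_3$, and since neither stem $29$ nor stem $23$ carries a class in filtration $1$, both of these terms also land in filtration $\geq 3$. Consequently the Massey product lies in filtration $\geq 3$, and in particular is not the filtration-$2$ class $h_4^2$.

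The main obstacle, and the step that requires genuine care, is the application of the Moss convergence theorem to this $4$-fold bracket: one must confirm that the matric Massey product is strictly defined on $E_4$ and that there are no crossing differentials, exactly as in the $3$-fold situation of Lemma 4.1 but now invoking the two higher differentials $d_2$ and $d_3$ simultaneously. Granting this, Moss's theorem yields an element of $\langle 2,\kappa,2\sigma,\sigma\rangle$ detected by a class of filtration $\geq 3$; as $\pi_{30}$ is concentrated in filtration $2$, that element must be $0$. Combined with the vanishing of the indeterminacy established above, this gives $\langle 2,\kappa,2\sigma,\sigma\rangle=0$.
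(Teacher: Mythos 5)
Your setup (strict definedness of the $4$-fold bracket and the vanishing of the indeterminacy) matches the paper's proof, but the core of your argument --- producing an element of the bracket in Adams filtration $\geq 3$ via a $4$-fold Massey product and ``Moss's theorem'' --- has a genuine gap, and you essentially concede it yourself with ``Granting this.'' Moss's convergence theorem is a statement about \emph{threefold} brackets and Massey products on a fixed page $E_r$; there is no available fourfold analogue that you can invoke, and establishing one is a serious undertaking (this is exactly the delicate machinery of higher/matric Massey product convergence in the sense of Kochman or Lawrence, with allowable defining systems and crossing-differential hypotheses). Worse, your proposed defining system does not live on any single page: you kill $h_0h_3\cdot h_3=h_0h_3^2$ with $h_4$ via $d_2$ (a statement on $E_2$), but kill $h_0\cdot d_0$ with $h_0h_4$ via $d_3$ (a statement on $E_3$). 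Since $d_2(h_4)\neq 0$, the class $h_4$ does not even exist on $E_3$, and on $E_3$ the product $h_0h_3^2$ is already zero, so its $d_3$-nullhomotopy may be taken to be $0$; consequently the ``cross term'' $h_0h_4\cdot h_4=h_0h_4^2$ has no meaning in an $E_4$-page Massey product. There is no page on which $\langle h_0,d_0,h_0h_3,h_3\rangle$ with your witnesses is defined, so the filtration estimate that drives your conclusion is not justified.

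The paper avoids this entirely by reducing to a threefold bracket, where Moss's theorem does apply: multiplying by $\nu$ and shuffling gives
$$\langle 2, \kappa, 2\sigma, \sigma \rangle\nu \subseteq \langle 2, \kappa, \langle 2\sigma, \sigma, \nu \rangle\rangle=\langle 2, \kappa, \nu_4\rangle,$$
and on the Adams $E_4$-page one computes $\langle h_0, d_0, h_2h_4\rangle = 0$ in filtration $4$, so every element of $\langle 2,\kappa,\nu_4\rangle$ (including the indeterminacy $\nu_4\pi_{15}$) has filtration strictly greater than $4$. Since $\nu\theta_4$ is detected by $p$ in filtration exactly $4$, the bracket $\langle 2,\kappa,\nu_4\rangle$ cannot contain $\nu\theta_4$; hence $\langle 2,\kappa,2\sigma,\sigma\rangle$ cannot contain $\theta_4$, and as $\pi_{30}\cong\mathbb{Z}/2\{\theta_4\}$ it must contain $0$. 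If you want to salvage your approach, this is the kind of reduction you need: trade the fourfold bracket for a threefold one (here at the cost of a $\nu$-multiplication and a comparison class in known filtration), rather than attempt a fourfold convergence argument for which no reference exists.
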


\begin{proof}
Again, $\langle\kappa, 2 \sigma, \sigma\rangle \subseteq \pi_{29} =0$. And
$$\langle 2, \kappa, 2 \sigma\rangle \supseteq \langle 2, \kappa, 2\rangle\sigma \ni \eta \kappa \sigma =0.$$
The indeterminacy of $\langle 2, \kappa, 2 \sigma\rangle$ is zero. Therefore, this $4$-fold Toda bracket is strictly defined. Again, $\langle 2, \kappa, \pi_{15}\rangle = 0$ for filtration reasons. And the other two parts of the indeterminacy are zero, which follows from the indeterminacy of $\langle 2, \sigma^2, 2 \sigma, \sigma \rangle$.

To see this bracket contains zero, we multiply by $\nu$.
$$\langle 2, \kappa, 2\sigma, \sigma \rangle\nu \subseteq \langle 2, \kappa, \langle 2\sigma, \sigma, \nu \rangle\rangle=\langle 2, \kappa, \nu_4\rangle.$$
Since in the Adams $E_4$-page $\langle h_0, d_0, h_2h_4\rangle = 0$ in the Adams filtration 4, there is an element in $\langle 2, \kappa, \nu_4\rangle$ that is detected by an element in filtration strictly higher than $4$. The indeterminacy of this bracket is $2\pi_{33}+\nu_4\pi_{15}=\nu_4\pi_{15}$, which also contains elements in filtration strictly higher than $4$.  On the other side, $\nu\theta_4$ is detected by $p$ in $Ext^4$. Therefore $\langle 2, \kappa, \nu_4\rangle$ does not contain $\nu\theta_4$. Then the lemma follows from the fact that $\pi_{30}\cong \mathbb{Z}/2$ and is generated by $\theta_4$.
\end{proof}

\begin{rem}
We can show directly that $\langle 2, \kappa, \nu_4\rangle=0$ with zero indeterminacy.
\end{rem}

\section{A few proofs}

We first prove Lemma 2.3 which states that $\sigma\pi_{53}=0$.

\begin{proof}
As shown in \cite{DI1}, $\pi_{53}\cong\mathbb{Z}/2\oplus\mathbb{Z}/2\oplus\mathbb{Z}/2\oplus\mathbb{Z}/2$. One set of generators can be chosen to be elements in $\nu\{h_5c_1\}, \nu\{C\}, \epsilon\{h_3^2h_5\}$ and $\kappa\{u\}$ respectively. Note that $x'$ detects $\epsilon\{h_3^2h_5\}$. Then the lemma follows from $\nu\sigma=0, \epsilon\sigma=0$ and $\kappa\sigma=0$.
\end{proof}











The following lemma is shown by Tangora in \cite{Tan}. We first sketch his proof, then give a more direct proof.

\begin{lem}
$\rho_{15}\theta_4=2\sigma\{h_0^2h_3h_5\}$.
\end{lem}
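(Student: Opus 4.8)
The plan is to read both sides in $\pi_{45}$ through the Adams filtration and reduce the identity to a single hidden multiplicative extension. Recall from Remark 3.2 that $\rho_{15}\theta_4$ is the nonzero order-$2$ class detected by $h_0^2h_5d_0$, and from Remark 3.3 that $\langle\theta_4,2,\sigma^2\rangle$ contains $2\alpha+\beta$, where $\alpha$ is detected by $h_3^2h_5=h_4^3$, together with the $2$-extension $4\alpha=2\gamma$ for $\gamma=\{h_5d_0\}$ of order $8$. Since $2\sigma$ is detected by $h_0h_3$ and $\{h_0^2h_3h_5\}$ by $h_0^2h_3h_5$, the naive product lands in $h_0h_3\cdot h_0^2h_3h_5=h_0^3h_3^2h_5$, which vanishes in $Ext$ because the $h_0$-tower on $h_3^2h_5$ has length two, so $h_0^2h_3^2h_5=0$. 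Hence $2\sigma\{h_0^2h_3h_5\}$ is forced into filtration $\ge 7$ (which also shows it is independent of the choice of $\{h_0^2h_3h_5\}$, the ambiguity in that choice being killed by $2\sigma$), and the whole content of the lemma is to show it is detected by $h_0^2h_5d_0$ and then to identify it with $\rho_{15}\theta_4$ exactly.

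First I would factor the desired $2\sigma$-extension as a hidden $\sigma$-extension followed by a $2$-extension. The crux is the claim that $\sigma\{h_0^2h_3h_5\}$ is detected by $h_0h_5d_0$, i.e. that it is the order-$4$ class $4\alpha=2\gamma$; multiplying by $2$ and using the $h_0$-tower $h_0h_5d_0\mapsto h_0^2h_5d_0$ then lands in the correct filtration-$7$ class, giving $2\sigma\{h_0^2h_3h_5\}=8\alpha=4\gamma$. To establish the hidden $\sigma$-extension I would pass to the associated Massey product: since $h_0^2h_3^2h_5=0$ in $Ext$, the bracket $\langle-,h_3,-\rangle$ detecting $\sigma\{h_0^2h_3h_5\}$ jumps in filtration, and I would compute its value to be $h_0h_5d_0$ by a direct cobar/May argument and then invoke the Moss convergence theorem \cite{Moss}. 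It is worth emphasizing that this extension is genuinely hidden and cannot arise from a naive factorization $\{h_0^2h_3h_5\}=4\sigma\{h_5\}$ through $\pi_{31}$: because $2\sigma^2=0$, such a factorization would force $\sigma\{h_0^2h_3h_5\}=4\sigma^2\{h_5\}$ to have order at most $2$, contradicting that it detects the order-$4$ class $2\gamma$.

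With the hidden $\sigma$-extension in hand, the remaining task is to pin down the precise class rather than merely its leading term. Both $2\sigma\{h_0^2h_3h_5\}$ and $\rho_{15}\theta_4$ are detected by $h_0^2h_5d_0$, hence agree modulo classes of strictly higher filtration; in stem $45$ there is nothing in filtration $8$, so the only possible discrepancy is the filtration-$9$ class $\{w\}$. I would rule this out exactly as in the proof of Theorem 4.3: one has $\eta^2\{w\}\ne 0$, while $\eta^2$ annihilates $\rho_{15}\theta_4$ by Lemma 6.1 and annihilates $2\sigma\{h_0^2h_3h_5\}$ since $2\eta^2=0$. Thus the difference lies in $\ker(\eta^2)$ and cannot equal $\{w\}$, so it vanishes and the equality follows.

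Finally, for completeness I would include the promised sketch of Tangora's original argument \cite{Tan}, which is purely algebraic: he produces the relevant Massey product relation among $h_0^3h_4$, $h_4^2$, $h_3$, $h_0^2h_3h_5$, and $h_5d_0$ directly in the May spectral sequence and then transports it to homotopy by a geometric boundary argument. The main obstacle in the direct proof is precisely the hidden $\sigma$-extension $\sigma\{h_0^2h_3h_5\}=2\gamma$: verifying that the governing Massey product takes the asserted value with controllable indeterminacy is where the real work lies, while the subsequent $2$-extension and the $\{w\}$-bookkeeping are comparatively routine.
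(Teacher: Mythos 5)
Your reduction has the right skeleton: granting that $\sigma\{h_0^2h_3h_5\}$ is detected by $h_0h_5d_0$, the bookkeeping with the hidden $2$-extension $4\alpha=2\gamma$ does force $2\sigma\{h_0^2h_3h_5\}=4\gamma$, and comparison with Tangora's result then leaves only a possible discrepancy by $\{w\}$. But the hidden $\sigma$-extension itself---which you correctly identify as ``where the real work lies''---is never proved. You do not say which Massey product is supposed to govern $\sigma\{h_0^2h_3h_5\}$, do not verify that it is defined on the relevant page, do not compute it, and do not check the hypotheses of the Moss convergence theorem; you only assert that a ``direct cobar/May argument'' would do it. Since that extension is equivalent to the leading-filtration content of the lemma, the proposal has a hole exactly at its center. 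The paper circumvents any direct hidden-extension computation by a Toda-bracket shuffle: from $\langle h_3,h_0h_3,h_0^3\rangle=h_0^3h_4$ on the Adams $E_3$-page and Moss's theorem one gets $\rho_{15}\in\langle\sigma,2\sigma,8\rangle$, whence
$\rho_{15}\theta_4=\sigma\langle 2\sigma,8,\theta_4\rangle=\sigma\langle 8\sigma,2,\theta_4\rangle=\sigma\{h_0^3h_3h_5\}$,
and finally $\sigma\{h_0^3h_3h_5\}=2\sigma\{h_0^2h_3h_5\}$ because the ambiguity between $\{h_0^3h_3h_5\}$ and $2\{h_0^2h_3h_5\}$ is $\eta\sigma\theta_4$, which dies after multiplication by $\sigma$. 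The shuffle converts the hidden extension you need into a bracket that Moss's theorem can actually evaluate; that mechanism is what your proposal is missing.

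There is also a circularity in your $\{w\}$-elimination step: you quote ``Lemma 6.1'' for $\eta^2\rho_{15}\theta_4=0$, but Lemma 6.1 is precisely the statement under proof---in the paper the vanishing of $\eta^2\rho_{15}\theta_4$ is a \emph{consequence} of this lemma (via $2\eta=0$), used later in the proof of Theorem 4.3, not an input to it. The step is repairable without circularity: $\eta^2\theta_4=0$ (a fact the paper uses freely, e.g.\ in computing the indeterminacy of $\eta^2\langle\theta_4,2,\kappa\rangle$), so $\eta^2\rho_{15}\theta_4=\rho_{15}(\eta^2\theta_4)=0$; together with $\eta^2\{w\}\neq 0$ this rules out the discrepancy. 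Finally, your closing sketch of Tangora's argument does not match \cite{Tan} as described in the paper (Tangora shows $\rho_{15}\theta_4\neq 0$ is detected by $h_0^2h_5d_0$ and then shuffles $\rho_{15}\langle\sigma,2\sigma,\sigma,2\sigma\rangle=\langle\rho_{15},\sigma,2\sigma,\sigma\rangle 2\sigma$); this does not affect your own argument, but it should not be presented as his proof.
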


\begin{proof}
Tangora first showed that $\rho_{15}\theta_4\neq0$ and is detected by $h_0^2h_5d_0$. We have
$$\rho_{15}\theta_4 = \rho_{15} \langle\sigma,2\sigma,\sigma,2\sigma\rangle=\langle\rho_{15} ,\sigma,2\sigma,\sigma\rangle 2\sigma.$$
Then the only possibility is that $\langle\rho_{15} ,\sigma,2\sigma,\sigma\rangle$ is detected by $h_0^2h_3h_5$.\\

We present another proof. In the Adams $E_3$-page, we have $\langle h_3,h_0h_3, h_0^3\rangle = h_0^3h_4$. Therefore, $\rho_{15}$ is contained in $\langle \sigma, 2\sigma, 8\rangle$. Then we have
\begin{equation*}
\begin{split}
\rho_{15}\theta_4 & = \langle \sigma, 2\sigma, 8 \rangle \theta_4 \\
& = \sigma \langle 2\sigma, 8, \theta_4 \rangle \\
& = \sigma \langle 8\sigma, 2, \theta_4 \rangle \\
& = \sigma \{h_0^3h_3h_5\} \\
& = 2 \sigma \{h_0^2h_3h_5\}.
\end{split}
\end{equation*}
For the first equation, $\langle \sigma, 2\sigma, 8 \rangle \theta_4$ has no indeterminacy, hence the equality.
For the last equation, the difference between $\{h_0^3h_3h_5\}$ and $2\{h_0^2h_3h_5\}$ contains elements of higher filtration, namely $\eta\sigma\theta_4$ in this case. The equality holds since $\eta\sigma^2\theta_4=0$.
\end{proof}

Now we prove Lemma 2.4 which states that $\langle\rho_{15}\theta_4, 2\sigma, \sigma\rangle=0$ with zero indeterminacy.

\begin{proof}
The indeterminacy is $\rho_{15}\theta_4\pi_{15}+\sigma \pi_{53}=\rho_{15}\theta_4\pi_{15}$. $\pi_{15}$ is generated by $\eta\kappa$ and $\rho_{15}$. We have $\rho_{15}^2=0$ and $\kappa\theta_4=0$ both for filtration reasons. Therefore the indeterminacy is equal to $\rho_{15}\theta_4\pi_{15}=0$.

By Lemma 6.1, $\langle\rho_{15}\theta_4, 2\sigma, \sigma\rangle=\langle2 \sigma \{h_0^2h_3h_5\}, 2 \sigma, \sigma\rangle$ contains $\sigma \langle 2 \{h_0^2h_3h_5\}, 2 \sigma, \sigma\rangle$. Note that $\langle 2 \{h_0^2h_3h_5\}, 2 \sigma, \sigma\rangle\subseteq\pi_{53}$ and $\sigma\pi_{53}=0$. This completes the proof.
\end{proof}

\end{document}